\newcommand{\R}{\mathbb{R}} %
\newcommand{\Z}{\mathbb{Z}} %
\newcommand{\N}{\mathbb{N}} %
\newcommand{\Nz}{\N_{0}} %
\newcommand{\C}{\mathbb{C}} %
\newcommand{\suchthat}{: } %
\newcommand{\accol}[1]{{\left\{ #1 \right\}}} %
\newcommand{\set}[2]{\accol{#1 \suchthat #2}} %
\newcommand{\abs}[1]{{\left\lvert #1 \right\rvert}} %
\newcommand{\paren}[1]{{\left( #1 \right)}} %
\newcommand{\floor}[1]{\left\lfloor#1\right\rfloor} 
\newcommand{\ceil}[1]{\left\lceil#1\right\rceil} 
\newcommand{\norm}[2]{{\left\|#2\right\|}_{#1}} %
\newcommand{\scal}[2]{\left\langle#1,  #2\right\rangle} %
\newcommand{\scala}[2]{\left\langle#1,  #2\right\rangle} %
\newcommand{\scalb}[2]{\left\langle\!\left\langle#1,  #2\right\rangle\!\right\rangle} %
\newcommand{\deq}{\mathrel{\mathop:} = } 
\newcommand{\card}{\#} 
\newcommand{\rightinfder}{\underline\partial^+} %
\newcommand{\eps}{\varepsilon} %
\newtheorem{theo}{Theorem} %
\newtheorem{propo}{Proposition}[section] %
\newtheorem{coro}[propo]{Corollary} %
\newtheorem{lemm}[propo]{Lemma} %
\newtheorem{defi}{Definition} %
\newenvironment{proof}{\textit{Proof.} }{\hfill $\square$}
\newcommand{\Snu}{{S}^\nu}
\DeclareMathOperator{\ind}{ind}
\newcommand{\alphamax}{\alpha_{\max}}
\newcommand{\alphamin}{\alpha_{\min}}
\newcommand{\pzero}{p_{0}}
\begin{document}

\begin{frontmatter}

    \title{Advanced topology on the multiscale \\ sequence spaces $\Snu$}

    \author{Jean-Marie Aubry} \ead{jmaubry@math.cnrs.fr}
    \address{Université de Paris-Est, Laboratoire d'Analyse et de
      Mathématiques Appliquées, UMR CNRS 8050, 61 av. du Général de
      Gaulle, F-94010 \textsc{Créteil, France}}
    
    \author{Françoise Bastin} \ead{f.bastin@ulg.ac.be}
    \address{Université de Liège, Département de Mathématiques (B37),
      Grande Traverse, 12, B-4000 \textsc{Liège, Belgique}}

    \begin{abstract}
	We pursue the study of the multiscale spaces $\Snu$ introduced
	by Jaffard in the context of multifractal analysis.  We give
	the necessary and sufficient condition for $\Snu$ to be
	locally $p$-convex, and exhibit a sequence of $p$-norms that
	defines its natural topology.  The strong topological dual of
	$\Snu$ is identified to another sequence space depending on
	$\nu$, endowed with an inductive limit topology.  As a
	particular case, we describe the dual of a countable
	intersection of Besov spaces.
    \end{abstract}

    \begin{keyword}
	sequence space \sep local convexity \sep Fréchet space \sep
	strong topological dual
	
	\textit{2000 MSC:} 46A45 (primary) \sep 46A16 \sep 46A20
    \end{keyword}

\end{frontmatter}

\section{Introduction}

The advent of multiscale analysis, for instance wavelet techniques,
has shown the need for a new, hierarchical way of organizing
information.  Instead of a sequential data set $(u_{n})_{n\in\N}$, one
has often to work with a tree-indexed data set $(x_{t})_{t \in T}$,
where $T \deq \bigcup\limits_{j \in \Nz} \paren{\Z/ \delta \Z}^{j}$,
$\delta \geq 2$ (without loss of generality, in all this paper we
assume that $\delta = 2$).

In the long tradition of studying sequence
spaces~\cite{Bierstedt:2003yq,Kothe:1969fr}, no special interest was
given to a hierarchical structure of the index set.  But practical
applications, such as multifractal analysis, rely heavily on this
structure: coefficients at different scales~$j$ do not have the same
importance, but within a same scale, they are often interchangeable.
Sequence spaces emphasizing this specific feature have been
introduced: for instance the classical Besov spaces, translated via
the wavelet transform into Besov sequence
spaces~\cite{Triebel:2004qy}, and the intersections of Besov
spaces~\cite{jaffard1}.  However these spaces and their topologies
(Besov-normed or projective limits thereof) provide only an indirect
control, via weighted $l^{p}$ sums, on the asymptotic repartition of
the sizes of the coefficients.  Direct control, such as asking that
the number of coefficients having a certain size be bounded above, was
the motivation which resulted in the more general class of topological
vector spaces called $\Snu$.

\paragraph*{Notations.}

In this paper we consider topological vector spaces (tvs) on the field
$\C$ of complex numbers.  The set of strictly positive natural numbers
is $\N$, and $\Nz \deq \{0\} \cup \N$.  We write $a \wedge b$ and $a
\vee b$ respectively for the minimum and the maximum of $a$ and $b$.
The integer parts are $\floor x \deq \max \set{ z \in \Z }{ z \leq x
}$ and $\ceil x \deq \min \set{ z \in \Z }{ z \geq x}$.  The tree $T$
is canonically identified to the set of indices $\Lambda \deq
\bigcup_{j \in \Nz} \{ j \} \times \{ 0, \dots, 2^j-1 \}$; finally we
define $\Omega \deq \C^\Lambda$ furnished with the pointwise
convergence topology.

\subsection{$\Snu$ spaces}

Inspired by the wavelet analysis of multifractal functions,
Jaffard~\cite{JaffardNotes} introduced spaces $\Snu$ of functions
defined by conditions on their wavelet coefficients.  It was shown
that these spaces are \emph{robust}, that is, they do not depend on
the choice of the wavelet basis (provided the mother wavelet is
sufficiently regular, localized, and has enough zero moments).  So we
can view them, and study their topology, as sequence spaces of wavelet
coefficients; the set of indices $\Lambda$ corresponds to taking the
dyadic wavelet coefficients of a 1-periodic function.

In this context, $\nu$ is a non-decreasing right-continuous function
of a real variable $\alpha$, with values in $\{-\infty\} \cup [0, 1]$,
that is not identically equal to $-\infty$ (an \emph{admissible
profile}).  We define
\begin{equation}
    \alphamin \deq \inf\set{\alpha}{\nu(\alpha) \geq 0} \label{eq:a0}
\end{equation}
and
\begin{equation}
    \alphamax \deq \inf\set{\alpha}{\nu(\alpha) = 1}
    \label{eq:amax}
\end{equation}
with the habitual convention that $\inf \emptyset = +\infty$.  It will
be convenient to remember that this definition implies
\begin{equation}
    \begin{array}{ll}
	\nu(\alpha) = -\infty & \text{ if } \alpha < \alphamin \\
	\nu(\alpha) \in [0, 1) & \text{ if } \alphamin \leq \alpha <
	\alphamax \\
	\nu(\alpha) = 1 & \text{ if } \alpha \geq \alphamax \text{
	(possibly this never happens)}.
    \end{array}
    \nonumber
\end{equation}

\begin{defi}
    The \emph{asymptotic profile} of a sequence $x \in \Omega$ is
    \begin{equation}
	\nu_{x}(\alpha) \deq \lim_{\eps \to 0^+} \
	\limsup_{j\to\infty} \frac{\log\paren{\card\set{k}{\abs{x_{j,
	k}} \geq 2^{-(\alpha+\eps)j}}}}{\log(2^{j})} 
    \end{equation}
\end{defi}
It is easily seen that $\nu_{x}$ is always admissible.

\begin{defi}
    Given an admissible profile $\nu$, we consider the vector space
    \begin{equation}
	\Snu \deq \set{\rule{0ex}{3ex} x \in \Omega}{\nu_{x}(\alpha)
	\leq \nu(\alpha) \quad \forall \alpha \in \R}.
	\label{eq:defSnu}
    \end{equation}
\end{defi}
Heuristically, a sequence $x$ belongs to $\Snu$ if at each scale $j$,
the number of $k$ such that $\abs{x_{j, k}} \geq 2^{-\alpha j}$ is of
order $\leq 2^{\nu(\alpha) j}$.

Finally let us define the \emph{concave conjugate} of $\nu$ is for $p
> 0$
\begin{equation}
    \eta(p) \deq \inf_{\alpha \geq \alphamin} \paren{ \alpha p -
    \nu(\alpha) + 1}.
    \label{eq:concon}
\end{equation}
This function appears in the context of multifractal analysis in the
so-called thermodynamic formalism~\cite{Arneodo:1999fk,jaffard1}.

\subsection{Basic topology}
\label{sec:btp}

Here we summarize the first properties of $\Snu$ established
in~\cite{ABDJ}: There exists a unique metrizable topology $\tau$ that
is stronger than the pointwise convergence and that makes $\Snu$ a
complete tvs.  This topology is separable.  If we define, for $\alpha
\in \R$ and $\beta \in \{-\infty\} \cup [0, +\infty)$, the distance
\begin{equation}
    d_{\alpha, \beta}(x, y) \deq d_{\alpha, \beta}(x-y) \deq \inf
    \set{C \geq 0}{\forall j, \card \accol{\abs{x_{j, k} - y_{j, k}}
    \geq C 2^{-\alpha j}} \leq C 2^{\beta j}} \label{eq:dab}
\end{equation}
(agreeing that $2^{j\beta} \deq 0$ when $\beta = -\infty$) and the
ancillary metric space
\begin{equation}
    E(\alpha, \beta) \deq \set{x \in \Omega}{d_{\alpha, \beta}(x) <
    \infty } 
\end{equation}
then for any sequence $\alpha_{n}$ dense in $\R$ and any sequence
$\eps_{m} \searrow 0$ we have
\begin{equation}
    \Snu = \bigcap_{n, m} E(\alpha_{n}, \nu(\alpha_{n}) +\eps_{m})
    \label{eq:alt}
\end{equation}
and $\tau$ coincides with the projective limit topology (the coarsest
topology which makes each inclusion $\Snu \subset E(\alpha_{n},
\nu(\alpha_{n}) + \eps_{m})$ continuous).  Remark that $E(\alpha,
\beta)$ is never a tvs when $0 \leq \beta < 1$, because the scalar
multiplication is not continuous in $E(\alpha, \beta)$; however it is
continuous in $\Snu$.

When $\beta = -\infty$, the space $E(\alpha, \beta)$ consists in the
set of sequences $x$ such that there exists $C$, for all $(j,k) \in
\Lambda$, $\abs{x_{j,k}} < C 2^{-\alpha j}$.  This space corresponds
to the space of wavelet coefficients of functions in the
Hölder-Zygmund class $C^{\alpha}$, as it was shown by
Meyer~\cite{meyer1}.

Let us also recall the connexion with Besov spaces
(Definition~\ref{defi:Besov} in \S~\ref{sec:Besov} below).  If $\eta$
is the concave conjugate of $\nu$, then
\begin{equation}
    \Snu \subset \bigcap_{\eps > 0} \bigcap_{p > 0} b^{\eta(p)/p -
    \eps}_{p, \infty} \label{eq:SnuB}
\end{equation}
with equality if and only if $\nu$ is concave.

\subsection{Prevalent properties}

Some measure-related properties of $\Snu$: Given $(\psi_{j,k})$ an
$L^{\infty}$-normalized orthogonal wavelet basis of $L^2(\R/\Z)$ and
assuming that $\alphamin > 0$, let $f_{x} \deq \sum_{j,k} x_{j,k}
\psi_{j,k}$ and let $d_{f_{x}} : \alpha \mapsto
\dim_{H}(\set{t}{h_{f_{x}}(t) = \alpha})$ be its Hausdorff spectrum of
Hölder singularities.  Then for $x$ in a
prevalent~\cite{hunt92:_preval} subset of $\Snu$, for all $\alpha \in
\R$, we have $\nu_{x}(\alpha) = \nu(\alpha)$ and $d_{f_{x}}(\alpha) =
\alpha \sup_{ \alpha'\in (0, \alpha]} \frac{\nu( \alpha')}{ \alpha'}$
if $ \alpha \leq h_{\max}$, $d_{f_{x}}(\alpha) = -\infty$ else (here
$h_{\max} \deq \inf_{\alpha > \alphamin} \frac{\alpha}{\nu(\alpha)}$).
We refer to~\cite{Aubry:zg} for the details.

\subsection{Outline of the results}

Our goal in this paper is to establish advanced topological properties
such as local convexity and duality for the spaces $\Snu$.  We shall
see that these properties, unlike those we recalled in
\S~\ref{sec:btp}, depend in a subtle way on the profile~$\nu$.

In \S~\ref{sec:prel} we give some definitions and preliminary results.
In \S~\ref{sec:locg} we study local convexity.  Having shown that
$\Snu$ is never $p$-normable (\S~\ref{sec:nono}), we give the
necessary and sufficient condition for local $p$-convexity in
\S~\ref{sec:locconv}.  A set of $p$-norms inducing the topology is
presented in \S~\ref{sec:semin}.  The last section
(\S~\ref{sec:duality}) of this article addresses the identification of
the topological dual of $\Snu$.  This dual $(\Snu)'$ turns out
(\S~\ref{sec:topodual}) to be a union of sequence spaces just smaller
than another space $S^{\nu'}$, where $\nu'$ can be derived from $\nu$
in a way shown in \S~\ref{sec:dualpro}.  In \S~\ref{sec:topodl} we
prove that the strong topology on $(\Snu)'$ is the same as the
inductive limit topology on this union and deduce the condition for
reflexivity.  The particular case where $\Snu$ is an intersection of
Besov spaces is detailed in \S~\ref{sec:dualB}.

\section{Preliminaries}
\label{sec:prel}

\subsection{Right-inf derivative}

We shall see that the local convexity of $\Snu$ depends on $\nu$, and
more precisely on its right-inf derivative defined, whenever
$\nu(\alpha) > -\infty$, as
\begin{equation}
    \rightinfder \nu(\alpha) \deq \liminf\limits_{h \to 0^+}
    \frac{\nu(\alpha+h)-\nu(\alpha)}{h} \label{eq:rid}
\end{equation}
for which holds an equivalent of the mean value inequality.

\begin{lemm}
    \label{lemm:semitaf}
    Let $\nu$ be an admissible profile and $p > 0$.  The following
    assertions are equivalent.
    \begin{enumerate}
	\item \label{it:un} For all $\alphamin \leq \alpha <
	\alphamax$, $\rightinfder \nu(\alpha) \geq p$;
	
	\item \label{it:do} For all $\alphamin \leq \alpha' \leq
	\alpha < \alphamax$, $\nu(\alpha) - \nu(\alpha') \geq
	p(\alpha-\alpha')$.
    \end{enumerate}
\end{lemm}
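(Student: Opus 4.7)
The implication (\ref{it:do}) $\Rightarrow$ (\ref{it:un}) is immediate: for $\alpha \in [\alphamin, \alphamax)$ and $h > 0$ small enough that $\alpha + h < \alphamax$, apply (\ref{it:do}) to the pair $(\alpha, \alpha+h)$ to obtain $\paren{\nu(\alpha+h) - \nu(\alpha)}/h \geq p$; taking $\liminf_{h \to 0^+}$ yields $\rightinfder \nu(\alpha) \geq p$.

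For the converse I plan to use a Dini-type mean value inequality tailored to the right-continuous non-decreasing profile $\nu$. Fix $\alphamin \leq \alpha' < \alpha < \alphamax$ and argue by contradiction: assume $\nu(\alpha) - \nu(\alpha') < p(\alpha - \alpha')$, set $\delta \deq p(\alpha - \alpha') - (\nu(\alpha) - \nu(\alpha')) > 0$, and introduce
\[
h(\beta) \deq \nu(\beta) - p\beta + \frac{\delta}{\alpha - \alpha'}(\beta - \alpha'), \qquad \beta \in [\alpha', \alpha].
\]
By construction $h(\alpha') = h(\alpha)$, while hypothesis (\ref{it:un}) forces $\rightinfder h(\beta) \geq \delta/(\alpha - \alpha') > 0$ at every point of $[\alpha', \alpha] \subset [\alphamin, \alphamax)$. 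Granting that $h$ attains its supremum at some $\beta^{\ast} \in [\alpha', \alpha]$, I finish as follows: if $\beta^{\ast} < \alpha$, the strictly positive right-inf derivative at $\beta^{\ast}$ produces $t > 0$ with $h(\beta^{\ast} + t) > h(\beta^{\ast})$, contradicting maximality; and if $\beta^{\ast} = \alpha$, the equality $h(\alpha') = h(\alpha)$ makes $\alpha'$ a maximum point as well, and the previous case applies there instead.

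The main obstacle is therefore the existence of this maximum, since $h$ is only right-continuous. The function is bounded on $[\alpha', \alpha]$ because $\nu$ takes values in $[0, 1)$ on $[\alphamin, \alphamax)$. I would pick a maximizing sequence $\beta_n \to \beta^{\ast} \in [\alpha', \alpha]$, pass to a subsequence, and distinguish two cases: either some further subsequence converges to $\beta^{\ast}$ from the right or is stationary, in which case right-continuity of $\nu$ yields $h(\beta^{\ast}) = \sup h$ directly; or eventually $\beta_n < \beta^{\ast}$, in which case the monotonicity of $\nu$ ensures that the left limit satisfies $\nu(\beta^{\ast-}) \leq \nu(\beta^{\ast})$, so $\lim h(\beta_n) = h(\beta^{\ast}) - \paren{\nu(\beta^{\ast}) - \nu(\beta^{\ast-})} \leq h(\beta^{\ast})$, and again the supremum is attained. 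This is the point at which the combination of right-continuity and monotonicity of $\nu$ is essential; once it is established, the contradiction above closes the argument.
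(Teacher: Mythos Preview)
Your argument is correct and complete; the existence of the maximum is indeed the only delicate point, and your case split (right-approach via right-continuity, left-approach via monotonicity) handles it properly.

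The paper's proof of (\ref{it:un}) $\Rightarrow$ (\ref{it:do}) is quite different and more analytic: it sets $f_n(x) \deq n\bigl(\nu(x+\tfrac{1}{n})-\nu(x)\bigr)$, observes that $\liminf_n f_n(x) \geq \rightinfder\nu(x) \geq p$ on $[\alpha',\alpha]$, applies Fatou's lemma to get $p(\alpha-\alpha') \leq \liminf_n \int_{\alpha'}^{\alpha} f_n$, and then evaluates the integral telescopically as $n\bigl(\int_{\alpha}^{\alpha+1/n}\nu - \int_{\alpha'}^{\alpha'+1/n}\nu\bigr) \to \nu(\alpha)-\nu(\alpha')$ by right-continuity. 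Your Dini-type contradiction is more elementary in that it avoids any integration or measure-theoretic lemma, relying only on the order structure; the Fatou route, on the other hand, is shorter and bypasses the somewhat fussy verification that the supremum is attained. Both proofs use right-continuity and monotonicity in essential ways, just at different moments.
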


\begin{proof}
    The~\ref{it:do} $\Rightarrow$~\ref{it:un} part is obvious.
    Conversely, let $f_{n}(x) \deq n(\nu(x+\frac{1}{n})-\nu(x))$.
    Note that $\liminf_{n} f_{n}(x) \geq \rightinfder \nu(x) \geq
    p$ when $\alpha' \leq x \leq \alpha$.  So
    \begin{align*}
	p(\alpha-\alpha') &\leq \int_{\alpha'}^{\alpha}
	\liminf_{n} f_{n}(x) d x \\
	\intertext{By Fatou's lemma, } &\leq \liminf_{n}
	\int_{\alpha'}^{\alpha} f_{n}(x) d x \\
	&\leq \liminf_{n} \ n
	\paren{\int_{\alpha}^{\alpha+\frac{1}{n}} \nu(x) d x -
	\int_{\alpha'}^{\alpha'+\frac{1}{n}} \nu(x) d x } \\
	&\leq \nu(\alpha) -\nu(\alpha')
    \end{align*}
    because $\nu$ is right-continuous.
\end{proof}

\subsection{$p$-norm and local $p$-convexity}
\label{sec:defp}

Let us recall a couple of definitions from
Jarchow~\cite{Jarchow:1981sp}, which generalize the notions of norm
and local convexity (these corresponding to the case $p = 1$).

\begin{defi}
    Let $X$ be a vector space and $0 < p \leq 1$.  A map $q : X \to
    [0, +\infty)$ is a \emph{$p$-seminorm} if $q(\lambda x) =
    \abs{\lambda} q(x)$ for all $\lambda \in \C$, $x \in X$ and if
    \begin{math}
	q(x+y)^{p} \leq q(x)^{p} + q(y)^{p} \nonumber
    \end{math}
    for all $x, y \in X$.
    
    If in addition $q(x) = 0$ only if $x = 0$ then $q$ is called a
    \emph{$p$-norm}.
\end{defi}

\begin{defi}
    Let $0 < p \leq 1$.  A subset $K$ of a vector space $X$ is
    \emph{$p$-convex} if for all $x_{1}, \dots, x_{N} \in K$ and
    $\theta_{1}, \dots, \theta_{N}$ such that $\sum_{n} \theta_{n}^{p}
    = 1$, the \emph{$p$-convex combination} $\sum_{n} \theta_{n}
    x_{n}$ belongs to $K$.
    
    We say that $K$ is \emph{absolutely $p$-convex} if in addition it
    is \emph{circled} (or \emph{balanced}), that is, $\lambda x \in K$
    whenever $x \in K$ and $\abs{\lambda} \leq 1$.
    
    The tvs $X$ is \emph{locally $p$-convex} if it has a basis of
    absolutely $p$-convex neighbourhoods of 0.
\end{defi}

Clearly, a $p$-normed space is locally $p$-convex.  For instance the
sequence space $l^p$, $p > 0$, is $1 \wedge p$-normed thus $1 \wedge
p$-locally convex.

When $X = \bigcap_{n} X_{n}$ endowed with the projective limit
topology, $X$ is locally $p$ convex if and only if for each $n$, for
each $0$-neighbourhood $V$ in $X_{n}$, there exist $n_{1}, \dots,
n_{L}$ and $0$-neighbourhoods $U_{1}, \dots, U_{L}$ in $X_{n_{1}},
\dots, X_{n_{L}}$ such that any $p$-convex combination of elements of
$U \deq \bigcap_{l} U_{l}$ stays in $V$.

\subsection{Besov spaces}
\label{sec:Besov}

To end this section, and to prepare the results of the next one, we
elucidate the link between (sequence) Besov spaces and the ancillary
spaces $E(\alpha, \beta)$.

\begin{defi}
    \label{defi:Besov}
    For $\alpha \in \R$, $0 < p < \infty$, the $b^{\alpha}_{p,
    \infty}$ Besov $1\wedge p$-norm of a sequence $x$ is given by
    \begin{equation}
	\norm{b^{\alpha}_{p, \infty}}{x} \deq \sup_{j \in \Nz}
	2^{(\alpha - \frac{1}{p}) j} \paren{\sum_{k = 0}^{2^j-1}
	\abs{x_{j, k}}^p}^{\frac{1}{p}} \label{eq:pnb}
    \end{equation}
    and if $p = \infty$,
    \begin{equation}
	\norm{b^{\alpha}_{\infty, \infty} }{x} \deq \sup_{j \in \Nz}
	\sup_{0 \leq k < 2^{j}} 2^{\alpha j} \abs{x_{j, k}}.
    \end{equation}
\end{defi}

It is easy to check that, being modelled on $l^p$, the space
$b^{\alpha}_{p, \infty}$ is $1\wedge p$-normed, thus $1\wedge
p$-convex (and not better).  Note that $b^{\alpha}_{\infty, \infty} =
E(\alpha, -\infty)$.

\begin{lemm}
    \label{lemm:contBesov}
    Let $0 < p < \infty$ and $s \in \R$.  If $\beta \geq \alpha p + 1
    - s$, then for all $x \in b^{s/p}_{p, \infty}$,
    \begin{equation}
	d_{\alpha, \beta}(x) \leq \norm{b^{s/p}_{p,
	\infty}}{x}^{\frac{p}{p+1}}.
	\label{eq:contBesov}
    \end{equation}
\end{lemm}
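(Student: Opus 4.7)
The plan is to bound the counting function $N_{j}(C) \deq \card \set{k}{\abs{x_{j,k}} \geq C 2^{-\alpha j}}$ by a Chebyshev/Markov-type inequality applied to the $p$-th power, then choose $C$ so that the resulting bound matches the one required in the definition~\eqref{eq:dab} of $d_{\alpha,\beta}$.

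First I would abbreviate $M \deq \norm{b^{s/p}_{p,\infty}}{x}$ and rewrite the Besov condition~\eqref{eq:pnb} as
\begin{equation}
    \sum_{k=0}^{2^{j}-1} \abs{x_{j,k}}^{p} \leq M^{p} \, 2^{(1-s)j}
    \qquad (j \in \Nz). \nonumber
\end{equation}
Applied to the indices $k$ for which $\abs{x_{j,k}} \geq C 2^{-\alpha j}$, this yields
\begin{equation}
    N_{j}(C) \cdot C^{p} \, 2^{-\alpha p j} \leq \sum_{k} \abs{x_{j,k}}^{p} \leq M^{p} \, 2^{(1-s)j}, \nonumber
\end{equation}
so $N_{j}(C) \leq M^{p} C^{-p} \, 2^{(\alpha p + 1 - s)j}$. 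The hypothesis $\beta \geq \alpha p + 1 - s$ then gives $N_{j}(C) \leq M^{p} C^{-p} \, 2^{\beta j}$ for every $j \in \Nz$.

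To conclude I would compare this with the constraint $N_{j}(C) \leq C \, 2^{\beta j}$ appearing in~\eqref{eq:dab}: it suffices that $M^{p} C^{-p} \leq C$, i.e. $C \geq M^{p/(p+1)}$. Taking $C \deq M^{p/(p+1)} = \norm{b^{s/p}_{p,\infty}}{x}^{p/(p+1)}$ verifies simultaneously both conditions for all $j$, so $d_{\alpha,\beta}(x) \leq C$ by definition of the infimum, which is exactly~\eqref{eq:contBesov}.

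There is no real obstacle here; the argument is an elementary Chebyshev estimate, and the only point requiring a bit of care is the algebraic bookkeeping that produces the exponent $p/(p+1)$ from the requirement that the same $C$ play both roles (threshold size and cardinality bound) in~\eqref{eq:dab}. The hypothesis $\beta \geq \alpha p + 1 - s$ is used only to absorb the $j$-dependent prefactor $2^{(\alpha p + 1 - s)j}$ into $2^{\beta j}$, uniformly in $j$.
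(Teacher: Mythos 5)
Your proof is correct and is essentially the paper's argument: the paper runs the same Chebyshev-type estimate by contradiction (assuming the cardinality bound fails at some scale $j$ and deriving $C > C$), whereas you state it directly, but the computation and the choice $C = \norm{b^{s/p}_{p,\infty}}{x}^{p/(p+1)}$ are identical.
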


\begin{proof}
    Let $C \deq \norm{b^{s/p}_{p, \infty}}{x}$.  If there exists a $j$
    such that
    \begin{equation}
	\card\set{k}{\abs{x_{j, k}} \geq C^{\frac{p}{p+1}} 2^{-\alpha
	j} } > C^{\frac{p}{p+1}} 2^{\beta j} \nonumber
    \end{equation}
    then $C \geq 2^{\frac{s-1}{p}j} \paren{\sum_{k} \abs{x_{j,
    k}}^p}^{\frac1p} > C 2^{\frac{\beta+s-1-\alpha p}{p} j} \geq C$, a flagrant
    contradiction.  So it must be that for all $j$,
    $\card\set{k}{\abs{x_{j, k}} \geq C^{\frac{p}{p+1}} 2^{-\alpha j}
    } \leq C^{\frac{p}{p+1}} 2^{\beta j}$.  This shows that
    $d_{\alpha, \beta}(x) \leq C^{\frac{p}{p+1}}$.
\end{proof}

It follows that $b^{s/p}_{p, \infty} \subset E(\alpha, \beta)$
continuously, but the converse inclusion is never true.

Another Besov embedding will be useful.
\begin{lemm}
    \label{lemm:bz}
    If $0 < p \leq p'$ and $\alpha \in \R$, then for all $x \in
    b^{\alpha}_{p', \infty}$,
    \begin{equation}
	\norm{b^{\alpha}_{p, \infty}}{x} \leq \norm{b^{\alpha}_{p',
	\infty}}{x}.
    \end{equation}
\end{lemm}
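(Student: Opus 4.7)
The plan is to reduce the statement to a standard monotonicity property of $L^p$ norms on a probability space, applied scale by scale.

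First I would fix a scale $j \in \Nz$ and show the scalewise inequality
\begin{equation*}
    2^{(\alpha - 1/p)j} \paren{\sum_{k=0}^{2^j-1} \abs{x_{j,k}}^{p}}^{1/p}
    \leq 2^{(\alpha - 1/p')j} \paren{\sum_{k=0}^{2^j-1} \abs{x_{j,k}}^{p'}}^{1/p'},
\end{equation*}
since taking the supremum on $j$ on both sides then yields the announced comparison of norms. The factor $2^{\alpha j}$ is the same on both sides and can be discarded, leaving me to compare $2^{-j/p}(\sum_k \abs{x_{j,k}}^{p})^{1/p}$ with its analogue at exponent $p'$.

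The key observation is that $2^{-j/p}(\sum_k \abs{x_{j,k}}^{p})^{1/p}$ is precisely the $L^{p}$-norm of the finite sequence $(x_{j,k})_{0\leq k<2^j}$ with respect to the \emph{normalized} counting measure $\mu_j$ on $\{0,\dots,2^j-1\}$, which is a probability measure. The classical monotonicity of $L^{p}$-norms on a probability space, a direct consequence of Jensen's inequality applied to the convex function $t \mapsto t^{p'/p}$, gives
\begin{equation*}
    \paren{\int \abs{f}^{p} \, d\mu_j}^{1/p} \leq \paren{\int \abs{f}^{p'} \, d\mu_j}^{1/p'}
\end{equation*}
whenever $0 < p \leq p'$, which is exactly the scalewise inequality. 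The case $p' = \infty$ is handled separately but analogously: $2^{-j/p}(\sum_k \abs{x_{j,k}}^{p})^{1/p} \leq \sup_k \abs{x_{j,k}}$, since we are averaging at most $2^j$ copies each no larger than the supremum.

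There is no real obstacle here: the inequality is a one-line consequence of Jensen once one recognizes the normalization $2^{-j/p}$ as the probabilistic mean. The only care needed is to treat the cases $p < p' < \infty$ and $p' = \infty$ uniformly, and to ensure the final supremum over $j$ respects the inequality, which is immediate since it holds scale by scale.
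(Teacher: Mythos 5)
Your proof is correct and follows essentially the same route as the paper's: both arguments work scale by scale and reduce the claim to the monotonicity of normalized $\ell^p$ norms on the $2^j$ points of a given scale, the normalization $2^{-j/p}$ being exactly what turns the counting measure into a probability measure. The paper derives this monotonicity from H\"older's inequality with exponents $p'/p$ and its conjugate, while you invoke Jensen's inequality for $t \mapsto t^{p'/p}$; these are interchangeable one-line arguments, and your explicit treatment of the case $p' = \infty$ is a harmless addition.
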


\begin{proof}
    By Hölder's inequality, if $p \leq p'$,
    \begin{equation}
	\norm{b^{\alpha}_{p, \infty}}{x} = 2^{(\alpha - \frac{1}{p})
	j} \paren{\sum_{k = 0}^{2^{j}-1} \abs{x_{j,
	k}}^{p}}^{\frac{1}{p}} \leq 2^{(\alpha - \frac{1}{p}) j}
	2^{(\frac{1}{p}-\frac{1}{p'}) j} \paren{ \sum_{k =
	0}^{2^{j}-1} \abs{x_{j, k}}^{p'}}^{\frac{1}{p'}} =
	\norm{b^{\alpha}_{p', \infty}}{x}.  \nonumber
    \end{equation}
\end{proof}

\paragraph*{Remark.}

This embedding uses specifically the fact that $0 \leq k 2^{-j} < 1$
(Besov space on a compact domain).  It can be compared to Lemma 8.2
of~\cite{ABDJ}, which is valid on any domain (e.g. $k \in \Z$): When
$p' \leq p$ and $\alpha - \frac{1}{p} \leq \alpha' - \frac{1}{p'}$ we
also have $\norm{b^{\alpha}_{p, \infty}}{x} \leq
\norm{b^{\alpha'}_{p', \infty}}{x}$.

\section{Local geometry of $\Snu$}
\label{sec:locg}

Here we apply the definitions of~\S~\ref{sec:defp} to $\Snu$ spaces.
We shall see that $\Snu$ is never $p$-normable, but that it is locally
$p$-convex for $p$ depending on $\nu$.

\subsection{Non normability}
\label{sec:nono}

\begin{propo}
    The tvs $\Snu$ is not $p$-normable for any $p > 0$.
\end{propo}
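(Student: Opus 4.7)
The plan is to show that $\Snu$ has no bounded neighbourhood of~$0$, which by the Aoki--Rolewicz theorem (cf.~\cite{Jarchow:1981sp}) rules out $p$-normability for every $p > 0$. Assume for contradiction that some basic neighbourhood
\begin{equation*}
    U = \set{x \in \Snu}{d_{\alpha_{l}, \beta_{l}}(x) < c_{l},\ 1 \leq l \leq L}
\end{equation*}
(with $\beta_{l} = \nu(\alpha_{l})+\eps_{m_{l}}$, or $\beta_{l}=-\infty$ when $\alpha_{l}<\alphamin$) is bounded; I will exhibit another basic neighbourhood $V$ that does not absorb~$U$.

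The witnesses are single-scale sequences. Choose $\alpha_{0}$ strictly greater than every~$\alpha_{l}$ satisfying $\alpha_{l} < \alphamax$ and, when $\alphamax < \infty$, strictly less than~$\alphamax$. Define $x^{(j)} \in \Omega$ by setting all $2^{j}$ entries at scale~$j$ equal to $2^{-\alpha_{0}j}$ and zero elsewhere. Single-scale support forces $\nu_{x^{(j)}} \equiv -\infty$, so $x^{(j)} \in \Snu$ trivially, and a direct computation from~\eqref{eq:dab} yields
\begin{equation*}
    d_{\alpha, \beta}(x^{(j)}) = \min\paren{2^{(1-\beta)j},\ 2^{(\alpha-\alpha_{0})j}}
\end{equation*}
(with the natural reading $2^{(1-\beta)j} = +\infty$ when $\beta=-\infty$). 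For each~$l$ at least one of the two exponents is strictly negative: $\alpha_{l} < \alpha_{0}$ when $\alpha_{l} < \alphamax$, or $\beta_{l} > 1$ when $\alpha_{l} \geq \alphamax$. Hence $d_{\alpha_{l}, \beta_{l}}(x^{(j)}) \to 0$ and $x^{(j)} \in U$ for~$j$ large.

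To construct $V$, I exploit the density of $\{\alpha_{n}\}$ and $\eps_{m} \searrow 0$: pick $\alpha^{*} \in \{\alpha_{n}\}$ with $\alpha_{0} < \alpha^{*} < \alphamax$ (or $\alpha_{0} < \alpha^{*} < \alphamin$ in the degenerate case $\alphamin = \alphamax$), and $\eps_{m}$ small enough that $\beta^{*} \deq \nu(\alpha^{*}) + \eps_{m} < 1$ (possible because $\nu(\alpha^{*}) < 1$ in the chosen strip; in the degenerate subcase $\beta^{*} = -\infty$). With $V = \set{x \in \Snu}{d_{\alpha^{*}, \beta^{*}}(x) < 1}$, both exponents in the formula become strictly positive for $(\alpha^{*}, \beta^{*})$, so $d_{\alpha^{*}, \beta^{*}}(x^{(j)}/t) \to +\infty$ for every fixed $t > 0$, which forces $U \not\subset tV$ for every~$t > 0$. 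The main technical obstacle is the regime-by-regime check that the open strip for~$\alpha^{*}$ (between $\alpha_{0}$ and either $\alphamax$ or $\alphamin$) is non-empty so that it actually meets the dense set~$\{\alpha_{n}\}$; once that is granted, the rest is routine bookkeeping.
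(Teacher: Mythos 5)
Your argument is correct and follows essentially the same route as the paper's: both proofs show that every basic neighbourhood of $0$ in $\Snu$ is unbounded by exhibiting single-scale witness sequences that remain in the given neighbourhood while blowing up in some other defining distance, which rules out a bounded $0$-neighbourhood and hence $p$-normability. The only cosmetic difference is the choice of witness and escape direction (the paper places one coefficient per scale and escapes in $d_{\alpha',-\infty}$ with $\alpha'<\alphamin$, whereas you fill a whole scale and escape at some $\alpha^{*}\in(\alpha_{0},\alphamax)$); your case analysis, including the degenerate case $\alphamin=\alphamax$, checks out.
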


\begin{proof}
    Suppose that $q$ is a $p$-norm defining the topology of $\Snu$.
    Then there are $\alpha_{l} \in \R$, $\eps_{l} > 0$ ($l = 1, \dots,
    L$) and $\delta_{0} > 0$ such that
    \begin{equation}
	\bigcap_{l = 1}^{L} U_{l} \subset B \nonumber
    \end{equation}
    where $B \deq \set{x \in \Snu}{q(x) \leq 1}$ and $U_{l} \deq
    \set{x \in \Snu}{d_{\alpha_{l}, \nu(\alpha_{l}) + \eps_{l}}(x)
    \leq \delta_{0}}$.  We assume $\alpha_{1} < \dots < \alpha_{L}$
    and $\delta_{0} < 1$.
    
    \underline{First case:} There is $l$ such that $\alpha_{l} <
    \alphamin$.  Let $n$ be the largest integer satisfying this.  For
    all $l \leq n$, $\nu(\alpha_{l}) = -\infty$ and thus
    $d_{\alpha_{l}, \nu(\alpha_{l}) + \eps_{l}}(x) = \sup_{(j, k) \in
    \Lambda} 2^{\alpha_{l} j} \abs{x_{j, k}}$.  For $m \in \Nz$ we
    define the sequence $x^{m} \in S^{\nu}$ by setting at scale $m$
    exactly one coefficient equal to $2^{-\alpha_{n} m} \delta_{0}$.
    We claim that $x^{m} \in B$ for sufficiently large $m$.  Indeed,
    if $l \leq n$ then $d_{\alpha_{l}, \nu(\alpha_{l}) +
    \eps_{l}}(x^{m}) \leq \delta_{0}$ for all $m$; if $l > n$ we have
    $\nu(\alpha_{l}) + \eps_{l} > 0$ hence $d_{\alpha_{l},
    \nu(\alpha_{l}) + \eps_{l}}(x^{m}) \leq \delta_{0}$ as soon as $m
    \geq -\frac{\log_{2}(\delta_{0})}{\nu(\alpha_{l}) + \eps_{l}}$.
    So $x^{m} \in \bigcap_{l = 1}^{L} U_{l} \subset B$.
    
    Let us now consider $\alpha_{n} < \alpha' < \alphamin$.  If $B'$
    denotes the unit ball in $E(\alpha', -\infty)$, our hypothesis
    that the topology of $\Snu$ stems from the $p$-norm $q$ implies
    that there exists $\lambda > 0$ such that $\lambda B \subset B'$.
    This would imply that $\lambda x^{m} \in B'$ for all $m$, a
    contradiction because $d_{\alpha', -\infty}(x^{m}) =
    2^{(\alpha'-\alpha_{n}) m} \to \infty$.
    
    \underline{Second case:} $\alpha_{l} \geq \alphamin$ for all $l$.
    We chose $\alpha'' < \alpha' < \alphamin$ and define the sequence
    $x^{m} \in S^{\nu}$ by setting at scale $m$ exactly one
    coefficient equal to $2^{-\alpha'' m} \delta_{0}$; the rest of the
    proof is identical to the sub-case $l > n$ above.
\end{proof}

\subsection{Local convexity}
\label{sec:locconv}

The convexity index will be
\begin{equation}
    \pzero \deq 1 \wedge \inf_{{0 \leq \nu(\alpha) < 1}} \rightinfder
    \nu (\alpha) \label{eq:pzero}
\end{equation}
(we recall that $0 \leq \nu(\alpha) < 1$ is equivalent to $\alphamin
\leq \alpha < \alphamax$).

How does this number come into play?  When $\nu$ is concave, we know
from~\eqref{eq:SnuB} that $\Snu$ is an intersection of (sequence)
Besov spaces $b^{\eta(p)/p - \eps}_{p, \infty}$ for $\eps > 0$ and $p
> 0$, $\eta$ being the convex conjugate~\eqref{eq:concon} of $\nu$.
Suppose that $0 < p \leq \pzero$, and observe that by
Lemma~\ref{lemm:semitaf}, $\frac{\eta(p)}{p} = \frac{\eta(\pzero
)}{\pzero} ( = \alphamax) < \infty$.  Then Lemma~\ref{lemm:bz} leads
to $b^{\eta(p)/p - \eps}_{p, \infty} \supset b^{\eta(\pzero)/\pzero -
\eps}_{\pzero, \infty}$.  So in fact
\begin{equation}
    \Snu = \bigcap_{\eps > 0} \bigcap_{p \geq \pzero} b^{\eta(p)/p -
    \eps}_{p, \infty} \nonumber
\end{equation}
an intersection of spaces at least $\pzero$-convex.  This idea leads
to the general case.

\begin{theo}
    \label{theo:locconv}
    Let $\pzero$ be defined by~\eqref{eq:pzero}
    \begin{itemize}
	\item If $\pzero > 0$, then $\Snu$ is locally $\pzero
	$-convex.  \item If $\pzero < 1$, then $\Snu$ is not locally
	$p$-convex for any $p > \pzero$.
    \end{itemize}
\end{theo}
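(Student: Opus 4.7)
The theorem has two halves requiring distinct strategies. For Part 1 ($\pzero > 0$ implies $\Snu$ locally $\pzero$-convex), my plan is to use the projective limit criterion of~\S\ref{sec:defp}: given a basic zero-neighborhood $V = \{x : d_{\alpha, \nu(\alpha)+\eps}(x) \leq 1\}$ with $\alpha \in [\alphamin, \alphamax)$, the goal is to produce a finite intersection of basic neighborhoods whose $\pzero$-convex hull stays in $V$. The main tool is the unit ball of a suitably chosen Besov $p$-seminorm: by Lemma~\ref{lemm:contBesov}, the unit ball of $b^{s/p}_{p,\infty}$ is $p$-convex (hence $\pzero$-convex when $p \geq \pzero$) and lies inside $\{d_{\alpha,\beta}(x) \leq 1\}$ as soon as $\beta \geq \alpha p + 1 - s$. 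By~\eqref{eq:SnuB} together with the closed graph theorem (both $\Snu$ and $b^{s/p}_{p,\infty}$ are complete metrizable tvs), this Besov ball is a zero-neighborhood of $\Snu$ whenever $s < \eta(p)$.

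The algebraic task is then to find $p \geq \pzero$ with $\alpha p + 1 - \nu(\alpha) - \eps < \eta(p)$, equivalently
\begin{equation*}
    \sup_{\alpha' \geq \alphamin} \paren{\nu(\alpha') - \nu(\alpha) - (\alpha' - \alpha) p} < \eps.
\end{equation*}
For $\alpha' \leq \alpha$, Lemma~\ref{lemm:semitaf} with $\pzero$ makes this supplicand $\leq 0$ at $p = \pzero$; for $\alpha' > \alpha$, right-continuity of $\nu$ handles $\alpha'$ close to $\alpha$, while a larger $p$ controls the complementary range. When $\nu$ is non-concave and no single $p$ works uniformly across all $\alpha'$, I would intersect finitely many Besov balls with distinct $p_l \geq \pzero$ (the intersection remains $\pzero$-convex); orchestrating the right family of $p_l$'s is the delicate step and the main obstacle.

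For Part 2 ($\pzero < 1$ implies $\Snu$ not locally $p$-convex for $p > \pzero$), I would construct explicit witnesses. By~\eqref{eq:pzero} and the liminf definition of $\rightinfder \nu$, there exist $\alpha^* \in [\alphamin, \alphamax)$, $p' \in (\pzero, p)$, and arbitrarily small $h > 0$ with $\nu(\alpha^* + h) - \nu(\alpha^*) < p' h$. Pick such $h$ satisfying $\nu(\alpha^*) + ph < 1$ and $\eps \in (0, (p - p')h)$; consider $V = \{d_{\alpha^* + h, \nu(\alpha^* + h) + \eps}(x) \leq 1\}$. For any basic zero-neighborhood $U = \bigcap_l \{d_{\alpha_l, \beta_l}(x) \leq \delta\}$ of $\Snu$ (with $\beta_l = \nu(\alpha_l) + \eps_m$, $\eps_m > 0$), for $j$ sufficiently large I partition $\{0, \dots, 2^j - 1\}$ into $N = \floor{2^{ph j}}$ blocks of size $M = \floor{2^{\nu(\alpha^*) j}}$ and let $x^{(i)}$ be supported only at scale $j$ on block $i$ with coefficients equal to $2^{-\alpha^* j}$. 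A direct computation gives $d_{\alpha_l, \beta_l}(x^{(i)}) = 2^{\min(\nu(\alpha^*) - \beta_l,\, \alpha_l - \alpha^*) j}$, which tends to $0$ (either $\alpha_l < \alpha^*$, or $\beta_l \geq \nu(\alpha_l) + \eps_m > \nu(\alpha^*)$), so $x^{(i)} \in U$ for $j$ large. The $p$-convex combination $y = N^{-1/p} \sum_i x^{(i)}$ has $NM \sim 2^{(\nu(\alpha^*) + ph)j}$ coefficients of magnitude $2^{-(\alpha^* + h)j}$; since $\nu(\alpha^*) + ph > \nu(\alpha^* + h) + \eps$ by choice of $\eps$, the quantity $d_{\alpha^* + h, \nu(\alpha^* + h)+\eps}(y)$ grows exponentially in $j$. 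Hence the $p$-convex hull of $U$ is not contained in any multiple of $V$, so $\Snu$ is not locally $p$-convex.
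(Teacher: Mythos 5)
Your Part 2 construction contains a quantitative error that defeats the conclusion. With $N = \floor{2^{phj}}$ blocks, the nonzero coefficients of $y = N^{-1/p}\sum_i x^{(i)}$ have magnitude $N^{-1/p}2^{-\alpha^* j}$, which lies between $2^{-(\alpha^*+h)j}$ and $2^{1/p}\,2^{-(\alpha^*+h)j}$: they sit essentially \emph{at} the test threshold. In~\eqref{eq:dab} the constant $C$ scales the threshold as well as the count, so taking $C$ just above $N^{-1/p}2^{hj}\leq 2^{1/p}$ empties the set $\set{k}{\abs{y_{j,k}}\geq C2^{-(\alpha^*+h)j}}$, and one computes
\begin{equation*}
d_{\alpha^*+h,\,\nu(\alpha^*+h)+\eps}(y)=\min\paren{N^{-1/p}2^{hj},\ NM\,2^{-(\nu(\alpha^*+h)+\eps)j}}\leq 2^{1/p},
\end{equation*}
bounded in $j$, not exponentially growing as you claim. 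What is needed is that the normalized size \emph{and} the normalized count diverge simultaneously; this forces fewer blocks, $N=\floor{2^{\sigma j}}$ with $\nu(\alpha^*+h)-\nu(\alpha^*)+\eps<\sigma<ph$ (such $\sigma$ exists by your choice of $h$ and $\eps$), making both exponents $h-\sigma/p$ and $\sigma+\nu(\alpha^*)-\nu(\alpha^*+h)-\eps$ strictly positive. This balancing is exactly what the paper's proof performs by tying the scale $j$ to $N$ so that both $C(N,j)$ and $C'(N,j)$ are bounded below by a positive power of $N$. With that correction your witness argument goes through (reducing to basic neighbourhoods $U$ is legitimate, since every $0$-neighbourhood contains one).

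In Part 1 the step you flag as ``the main obstacle'' is where the proof actually lives, and the route you propose does not close it. Your only bridge from Besov control to $d_{\alpha,\beta}$ control is Lemma~\ref{lemm:contBesov}, which uses one Besov norm at a time; hence a containment $\bigcap_l B_l\subset V$ of an intersection of Besov balls would still have to be certified through a single index $l$, i.e.\ the single-$p$ situation, which genuinely fails for non-concave $\nu$ (take $\nu$ piecewise linear with slope $1$ then slope $10$ on $[\alphamin,\alphamax)$: any $p$ large enough to handle $\alpha'>\alpha$ violates your inequality for $\alpha'<\alpha$, as you essentially observe). The paper resolves this by a decomposition, not an intersection: the neighbourhood $U$ is a finite intersection of $d_{\alpha_l,\nu_l}$-balls, each $x_n\in U$ is split \emph{by coefficient magnitude} into layers $x_n^{l}$, and only each layer --- few coefficients, all of comparable size --- is placed in a Besov ball $b^{s_l/\pzero}_{\pzero,\infty}$ with a layer-dependent $s_l$. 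The $\pzero$-convexity of each Besov norm is applied layerwise to $x^l=\sum_n\theta_n x_n^l$, Lemma~\ref{lemm:contBesov} converts each layer back into a bound on $d_{\alpha,\nu(\alpha)+\eps}(x^l)$ (this is where Lemma~\ref{lemm:semitaf}, i.e.\ the definition~\eqref{eq:pzero} of $\pzero$, enters), and the subadditivity $d_{\alpha,\nu(\alpha)+\eps}(x)\leq\sum_l d_{\alpha,\nu(\alpha)+\eps}(x^l)$ concludes. Without this splitting by size I do not see how your family of $p_l$'s can be orchestrated, so Part 1 remains a genuine gap rather than a routine verification.
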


\begin{proof}
    We first prove the second point.  Suppose that $\pzero < p < 1$.
    Our purpose is to find a neighbourhood $V$ in $\Snu$, for instance
    the unit ball in some $E(\alpha', \nu(\alpha') + \eps)$, such that
    it cannot contain the $p$-convex hull of any $0$-neighbourhood
    $U$.
    
    By definition of $\pzero$, there exist $\eps > 0$ and $\alphamin
    \leq \alpha < \alpha'$ such that $\nu(\alpha') + \eps < 1$ and
    $\nu(\alpha') - \nu(\alpha) + \eps < p (\alpha' - \alpha)$.  For
    short we shall write
    \begin{equation}
	s \deq \alpha'-\alpha \nonumber
    \end{equation}
    and
    \begin{equation}
	t \deq \nu(\alpha') - \nu(\alpha) + \eps.  \nonumber
    \end{equation}
    Thanks to the right-continuity of $\nu$, $\eps$ and $s$ can be
    taken small enough so that
    \begin{equation}
	\frac{p}{p+1} \paren{s + t} < 1 - \nu(\alpha).
	\label{eq:sml}
    \end{equation}
    
    Assume that $U$ is a $p$-convex $0$-neighbourhood in $\Snu$ such
    that $$U\subset V=\left\{\rule{0ex}{3ex} x\in \Snu:
    d_{\alpha',\nu(\alpha')+\varepsilon}(x)<1\right\} $$
    and let $z\in\Snu$ be such that $z_{j, k}=2^{-\alpha j}$ for
    $\lfloor 2^{\nu(\alpha)j}\rfloor$ values of $k$ at each scale $j$.
    Since $\Snu$ is a tvs, there is $\lambda>0$ such that $\lambda
    z\in U$; moreover, the special structure of the topology of $\Snu$
    allows to say that this remains true if some coefficients have
    been set to $0$ or moved within the scale.

    Now, let $N \in \mathbb{N}$ be fixed.  If $j_0$ is the smallest
    integer such that $2^{j_0} \ge N 2^{\nu(\alpha) j_0}$, we
    construct $x_1,\ldots, x_N$ having, at each scale $j \ge j_0$
    disjoint sets of cardinal $\lfloor 2^{\nu(\alpha)j} \rfloor$ of
    coefficients equal to $\lambda 2^{-\alpha j}$ (and the others are
    $0$).  These sequences all belong to $U$.  We then form the
    $p$-convex combination 
    \begin{equation*}
	x \deq N^{-\frac{1}{p}} \sum_{n=1}^N x_n.
    \end{equation*}
    Note that, at each scale $j \ge j_0$, the sequence $x$ has
    $N\lfloor 2^{\nu(\alpha)j} \rfloor = C'(N,j) 2^{\nu(\alpha') +
    \varepsilon)j}$ coefficients of size equal to
    $N^{-\frac{1}{p}}\lambda 2^{-\alpha j}=C(N,j)2^{-\alpha'j}$, with
    $C(N,j) \deq \lambda N^{-\frac{1}{p}}2^{sj}$ and $C'(N,j) \deq
    N2^{-tj}\frac{\lfloor 2^{\nu(\alpha)j}\rfloor}{2^{\nu(\alpha)j}}$.

    Let us focus on the scale
    \begin{math}
	j \deq \ceil{\frac{\frac{p+1}{p} \log_{2}\paren{N} - \log_{2}
	\paren{\lambda}}{s + t}}.  \nonumber
    \end{math}
    Because of~\eqref{eq:sml}, we check that this $j \geq j_{0}$.
    Furthermore since $j \geq \frac{\frac{p+1}{p} \log_{2}\paren{N} -
    \log_{2} \paren{\lambda}}{s + t}$,
    \begin{align*}
	C(N, j) & \geq \lambda N^{-\frac{1}{p}}
	2^{\frac{s}{s+t}\paren{\frac{p+1}{p} \log_{2}(N) -\log_{2}(
	\lambda )} }\\
	& \geq \lambda^{\frac{t}{s+t}} N^{\frac{p s - t}{p (s + t)}}
    \end{align*}
    and since $j < 1 + \frac{\frac{p+1}{p} \log_{2}\paren{N} -
    \log_{2} \paren{\lambda}}{s + t}$,
    \begin{align*}
	C'(N, j) & > \frac{N}2 2^{-t -\frac{t}{s+t}
	\paren{\frac{p+1}{p} \log_{2}(N) -\log_{2}( \lambda )} } \\
	& > 2^{-t-1} \lambda^{\frac{t}{s+t}} N^{\frac{p s - t}{p (s +
	t)}}.
    \end{align*}
    By the hypothesis on $p$, the exponent of $N$ is strictly positive
    and this shows that $d_{\alpha', \nu(\alpha') + \eps}(x)$ can be
    arbitrarily large with $N$.  In particular, no matter how small
    the neighbourhood $U$ is, there exists a $p$-convex combination of
    elements of $U$ which does not belong to $V$, meaning the unit
    ball in $E(\alpha', \nu(\alpha') + \eps)$.  So $\Snu$ is not
    locally $p$-convex.
    
    The proof of the first point boils down to this: Let $M > 0$,
    $\eps > 0$ and $\alpha$ be fixed, we want to find a
    $0$-neighbourhood $U$ in $\Snu$ such that any $\pzero$-convex
    combination of elements $x_{1}, \dots, x_{N} \in U$ will stay in
    $V \deq \set{x \in \Snu}{d_{\alpha, \nu(\alpha) + \eps}(x) \leq
    M}$.
    
    The two cases $\nu(\alpha) = -\infty$ and $\nu(\alpha) = 1$ are
    both trivial because $E(\alpha, -\infty) = b^{\alpha}_{\infty,
    \infty}$ and $E(\alpha, 1 + \eps) = \Omega$ (the set of all
    sequences with the topology of pointwise convergence,
    see~\cite{Aubry:2007uq}) are locally convex.
    
    From now on we assume that $0 \leq \nu(\alpha) < 1$.  Let $L \deq
    \ceil{(\alpha - \alphamin) \frac{2 \pzero}\eps}$, $\lambda \deq
    \frac{M}{L+2}$ and for $-1 \leq l \leq L$ let $\alpha_{l} \deq
    \alphamin + \frac{\eps}{2 \pzero} l$ and $\nu_{l} \deq
    \nu(\alpha_{l}) + \frac{\eps}{2}$.  Note that $\alpha_{L} \geq
    \alpha$.  We construct
    \begin{equation}
	U \deq \bigcap_{l = -1}^{L} U_{l} \label{eq:U}
    \end{equation}
    where
    \begin{equation}
	U_{l} \deq\set{x \in \Snu}{d_{\alpha_{l}, \nu_{l}}(x) <
	\lambda} \nonumber
    \end{equation}
    (in particular, since $\nu_{-1} = -\infty$, $U_{-1} \subset
    \set{x}{\forall j, k, \abs{x_{j, k}} < \lambda 2^{-\alpha_{-1}j}
    }$).
    
    For an arbitrary $N \in \N$, let $x_{1}, \dots, x_{N} \in U$ and
    $\theta_{1}, \dots, \theta_{N}$ be the coefficients of a
    $\pzero$-convex combination $x \deq \sum_{n = 1}^{N} \theta_{n}
    x_{n}$.  We split each $x_{n}$ as $x_{n} = \sum_{l = 0}^{L+1}
    x_{n}^{l}$, where for $0 \leq l \leq L$, $x_{n}^{l}$ receives the
    coefficients $\lambda 2^{-\alpha_{l}j} < \abs{x_{j, k}} \leq
    \lambda 2^{-\alpha_{l-1}j}$ (the others are set to $0$) and
    $x_{n}^{L+1}$ receives the coefficients $\abs{x_{j, k}} \leq
    \lambda 2^{-\alpha_{L}j}$; since $x_{n} \in U_{-1}$ it has no
    coefficients $\abs{x_{j, k}} > \lambda 2^{-\alpha_{-1}j}$.  Once
    this is done, we do the $\pzero$-convex combinations
    \begin{equation}
	x^{l} \deq \sum_{n = 1}^{N} \theta_{n} x_{n}^{l}.  \nonumber
    \end{equation}
    Remark that $x_{n} \in U$ implies that each $x_{n}^{l} \in U$ and
    a fortiori $x_{n}^{l} \in U_{l}$.  Let us contemplate two cases.
    
    \underline{When $0 \leq l \leq L$:} Since $x_{n}^{l}$ is in
    $U_{l}$ and has only coefficients $\abs{x_{j, k}} > \lambda
    2^{-\alpha_{l}j}$, the cardinal of the set of non-zero
    coefficients at scale $j$ in $x_{n}^{l}$ is smaller than $\lambda
    2^{\nu_{l} j}$, and these coefficients are all bounded from above
    by $\lambda 2^{-\alpha_{l-1}j}$.  It follows that if $s - 1
    +\nu_{l} - \alpha_{l-1} p \leq 0$, then $\norm{b^{s/p}_{p,
    \infty}}{x_{n}^{l}} \leq \lambda^{\frac{p+1}{p}}$.  Actually we
    take $p = \pzero$ and $s = s_{l} \deq 1 + \alpha_{l-1} \pzero -
    \nu_{l}$.  Since $b^{s_{l}/\pzero}_{\pzero, \infty}$ is $\pzero
    $-convex, $\norm{b^{s_{l}/\pzero}_{\pzero, \infty}}{x^{l}} \leq
    \lambda^{\frac{\pzero +1}{\pzero}}$ as well.
    
    Thanks to Lemma~\ref{lemm:semitaf} and the definition of $\pzero$,
    \begin{align*}
	\alpha \pzero + 1 - s_{l} - \nu(\alpha) - \eps & = \pzero
	(\alpha - \alpha_{l-1}) + \nu_{l} - \nu(\alpha) - \eps \\
	& = \pzero (\alpha - \alpha_{l} + \frac{\eps}{2 \pzero}) +
	\nu(\alpha_{l}) + \frac{\eps}{2} - \nu(\alpha) - \eps \\
	& = \pzero (\alpha - \alpha_{l}) + \nu(\alpha_{l}) -
	\nu(\alpha) \\
	&\leq 0.
    \end{align*}
    Thus Lemma~\ref{lemm:contBesov} is applicable with $\beta =
    \nu(\alpha) + \eps$, and we get that
    \begin{equation}
	d_{\alpha, \nu(\alpha) + \eps}(x^{l}) \leq
	\norm{b^{s_{l}/\pzero}_{\pzero,
	\infty}}{x^{l}}^{\frac{\pzero}{\pzero + 1}} \leq \lambda.
	\nonumber
    \end{equation}
    
    \underline{When $l = L+1$}, simply notice that what remains in
    each $x_{n}^{L+1}$ are coefficients $\abs{x_{j, k}} \leq \lambda
    2^{-\alpha j}$.  The $\pzero$-convex combination $x^{L+1}$ will
    have \emph{a fortiori} only coefficients $\abs{x_{j, k}} \leq
    \lambda 2^{-\alpha j}$ and this shows that $d_{\alpha, \nu(\alpha)
    + \eps}(x) \leq \lambda$ (indeed any $C > \lambda$ satisfies the
    condition in~\eqref{eq:dab}, so their infimum is $\leq \lambda$).
    
    Finally,
    \begin{equation}
	d_{\alpha, \nu(\alpha) + \eps}(x) \leq \sum_{l = 0}^{L+1}
	d_{\alpha, \nu(\alpha) + \eps}(x^{l}) \leq (L+2) \lambda = M.
	\nonumber
    \end{equation}
    We have proved that $\Snu$ is locally $\pzero$-convex.
    
\end{proof}

\begin{coro}
    \label{coro:frechet}
    The space $\Snu$ is a Fréchet space if and only if $\rightinfder
    \nu (\alpha) \geq 1$ for all $\alpha \in [\alphamin,
    \alphamax)$.
\end{coro}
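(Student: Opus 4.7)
The plan is to reduce the corollary to the local convexity question already settled by Theorem~\ref{theo:locconv}. Recall that a Fr\'echet space is by definition a complete, metrizable and locally convex tvs. From~\S\ref{sec:btp} we already know that $\Snu$ carries a unique complete metrizable tvs topology $\tau$ that is stronger than pointwise convergence, so being Fr\'echet is equivalent to being locally convex, i.e.\ locally $p$-convex in the sense of \S\ref{sec:defp} with $p = 1$.

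Next I would unwind the definition~\eqref{eq:pzero} of $\pzero$. Since $\pzero = 1 \wedge \inf_{0 \leq \nu(\alpha) < 1} \rightinfder\nu(\alpha)$, and since the condition $0 \leq \nu(\alpha) < 1$ is (by the trichotomy following~\eqref{eq:amax}) equivalent to $\alphamin \leq \alpha < \alphamax$, we have
\begin{equation*}
    \pzero = 1 \iff \rightinfder \nu(\alpha) \geq 1 \text{ for all } \alpha \in [\alphamin, \alphamax).
\end{equation*}

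Now I would apply Theorem~\ref{theo:locconv} with $p = 1$. If the right-inf derivative condition holds, then $\pzero = 1 > 0$, so by the first bullet of Theorem~\ref{theo:locconv} the space $\Snu$ is locally $1$-convex, hence locally convex, hence Fr\'echet. Conversely, if the condition fails, then $\pzero < 1$, and the second bullet of Theorem~\ref{theo:locconv} (applied to any $p$ with $\pzero < p \leq 1$, in particular $p = 1$) shows that $\Snu$ is not locally convex, so it cannot be Fr\'echet.

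There is essentially no obstacle here: the whole content is packaged in Theorem~\ref{theo:locconv}, and the only thing to check is the bookkeeping between the condition ``$\rightinfder \nu \geq 1$ on $[\alphamin, \alphamax)$'' and the condition ``$\pzero = 1$'', which is immediate from~\eqref{eq:pzero}.
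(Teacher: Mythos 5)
Your proposal is correct and follows exactly the paper's own argument: the paper's proof likewise invokes the completeness and metrizability of $\Snu$ from~\cite{ABDJ} and then applies Theorem~\ref{theo:locconv} with $\pzero = 1$. You merely spell out the (immediate) bookkeeping between ``$\rightinfder \nu \geq 1$ on $[\alphamin, \alphamax)$'' and ``$\pzero = 1$'', which the paper leaves implicit.
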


\begin{proof}
    We already know that $\Snu$ is a metrizable and complete
    tvs~\cite{ABDJ}.  We apply the previous theorem with $\pzero = 1$.
\end{proof}

\subsection{Another description of the topology}
\label{sec:semin}

The topology of a Fréchet space can always be defined by a sequence of
seminorms.  When the space is only $\pzero$-convex, the seminorms are
naturally to be replaced by $\pzero$-seminorms.  The interesting
feature in the case of $\Snu$ is that they can be made explicit as
$\pzero$-norms interpolating two Besov spaces.

\newcommand{\thebesov}{b^{s}_{\pzero, \infty}}
\begin{theo}
    \label{theo:seminorms}
    If $\pzero > 0$, the topology of $\Snu$ is induced by the family of
    norms $\norm{b^{\alphamin - \eps}_{\infty, \infty}}{x}$ together
    with the $\pzero$-norms
    \begin{equation}
	\norm{\alpha, \eps}{x} \deq \inf \set{\norm{\thebesov}{x'} +
	\norm{ b^{\alpha}_{\infty, \infty}}{x''} }{
	x' + x'' = x }
    \end{equation}
    where $\alpha \in [\alphamin, \alphamax)$, $\eps > 0$ and $s
    \deq \alpha + \frac{1-\nu(\alpha)}{\pzero} - \eps$.
    
    This family of $\pzero$-norms can be made countable by taking
    sequences $(\alpha_{n})$ dense in $[\alphamin, \alphamax)$
    and $(\eps_{m}) \to 0^{+}$.
\end{theo}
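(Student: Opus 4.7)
The plan is to prove both inclusions between $\tau$ and the topology generated by the proposed $\pzero$-(semi)norms: in one direction, each proposed (semi)norm is continuous on $(\Snu, \tau)$; in the other, each generating pseudodistance $d_{\alpha, \nu(\alpha)+\eps}$ of $\tau$ is controlled by these new (semi)norms.

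For the inclusion (new) $\subset \tau$: the norm $\norm{b^{\alphamin-\eps}_{\infty,\infty}}{\cdot}$ coincides with $d_{\alphamin - \eps, -\infty}(\cdot)$, a basic pseudodistance of $\tau$ (since $\nu(\alphamin-\eps) = -\infty$ and $E(\alpha,-\infty) = b^\alpha_{\infty,\infty}$). For $\norm{\alpha,\eps}{\cdot}$ with $\alpha \in [\alphamin, \alphamax)$, it suffices to exhibit, for every $x \in \Snu$, an explicit decomposition $x = x' + x''$ with both summands finite in their respective Besov norms, and bounds controlled by pseudodistances of $\tau$. Choosing a fine discretization $\alphamin = \alpha_{0} < \cdots < \alpha_{L} = \alpha$ of step $h$ and setting $C \deq \max_{l} d_{\alpha_{l}, \nu(\alpha_{l})+\eps'}(x)$ for a suitable $\eps' \in (0, \pzero\eps)$, I place the entries of $x$ with $\abs{x_{j,k}} \le C 2^{-\alpha j}$ into $x''$ and the rest into $x'$. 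Then $\norm{b^{\alpha}_{\infty,\infty}}{x''} \le C$ is immediate. For $x'$, I partition its nonzero entries into dyadic shells $\abs{x_{j,k}} \in [C 2^{-\alpha_{l} j}, C 2^{-\alpha_{l-1} j}]$, each containing at most $C 2^{(\nu(\alpha_{l}) + \eps')j}$ elements by definition of $C$. Summing $\sum_{k} \abs{x'_{j,k}}^{\pzero}$ shell by shell and invoking Lemma~\ref{lemm:semitaf} (which gives $\nu(\alpha_{l}) - \alpha_{l}\pzero \le \nu(\alpha) - \alpha \pzero$ for $\alpha_{l} \le \alpha$) then yields a bound of the form $\norm{\thebesov}{x'} \le K\,C^{(\pzero+1)/\pzero}$, with $K$ depending only on $L$ and $h$.

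For the inclusion $\tau \subset$ (new): I first note that $d_{\alpha, \beta}$ is subadditive, since from \eqref{eq:dab} the set $\{k : \abs{(x+y)_{j,k}} \ge (\delta + \delta')2^{-\alpha j}\}$ is contained in the union of the analogous sets for $x$ and $y$. For any decomposition $x = x' + x''$, Lemma~\ref{lemm:contBesov} applied with $p = \pzero$ and its parameter $s$ replaced by $\pzero s$ (so the relevant Besov space is $\thebesov$) gives $d_{\alpha, \nu(\alpha)+\eps}(x') \le \norm{\thebesov}{x'}^{\pzero/(\pzero+1)}$, provided $\nu(\alpha)+\eps \ge \alpha \pzero + 1 - \pzero s$; with $s = \alpha + (1-\nu(\alpha))/\pzero - \eps$ this reduces to the trivial $\eps(1-\pzero) \ge 0$. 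Together with $d_{\alpha, \nu(\alpha)+\eps}(x'') \le \norm{b^{\alpha}_{\infty,\infty}}{x''}$ (immediate, since $x''$ has no coefficient exceeding this threshold), subadditivity and taking the infimum yield
\begin{equation*}
    d_{\alpha, \nu(\alpha)+\eps}(x) \le \norm{\alpha,\eps}{x}^{\pzero/(\pzero+1)} + \norm{\alpha,\eps}{x}.
\end{equation*}
For $\alpha$ outside $[\alphamin, \alphamax)$, either $\nu(\alpha) = -\infty$ (covered by $\norm{b^{\alphamin - \eps}_{\infty,\infty}}{\cdot}$) or $\nu(\alpha) = 1$ (trivial, since $E(\alpha, 1+\eps) = \Omega$). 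Countability of the generating family then follows by restricting to dense $(\alpha_{n})$ and $\eps_{m} \searrow 0$, parallel to the argument giving \eqref{eq:alt}.

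I expect the shell decomposition step to be the main obstacle: it requires careful bookkeeping to extract exactly the exponent $s = \alpha + (1-\nu(\alpha))/\pzero - \eps$ from the constraints of Lemma~\ref{lemm:semitaf}. A minor side issue is to verify that $\norm{\alpha,\eps}{\cdot}$ is genuinely a $\pzero$-norm: non-degeneracy follows from the displayed inequality above (a zero value forces every coefficient $x_{j,k}$ to vanish), but the infimum-of-sums construction directly gives only a quasi-$\pzero$-norm, so the cleanest fix for $\pzero$-subadditivity is to replace the sum inside the infimum by $(\norm{\thebesov}{x'}^{\pzero} + \norm{b^\alpha_{\infty,\infty}}{x''}^{\pzero})^{1/\pzero}$, an equivalent but genuinely $\pzero$-subadditive expression.
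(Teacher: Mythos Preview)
Your approach matches the paper's: both directions use exactly the same ingredients (shell decomposition together with Lemma~\ref{lemm:semitaf} for continuity of $\norm{\alpha,\eps}{\cdot}$; Lemma~\ref{lemm:contBesov} plus subadditivity of $d_{\alpha,\beta}$ for the reverse inclusion). The paper in fact recycles verbatim the $\alpha_l, \nu_l, U_l$ construction from the proof of Theorem~\ref{theo:locconv}.

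One bookkeeping gap to fix: your discretization starts at $\alpha_0 = \alphamin$, so the shells $[C\,2^{-\alpha_l j}, C\,2^{-\alpha_{l-1}j}]$ only cover coefficients up to $C\,2^{-\alphamin j}$. Any coefficient with $\abs{x_{j,k}} > C\,2^{-\alphamin j}$ lies outside every shell, and without an upper bound on it your estimate for $\norm{\thebesov}{x'}$ is incomplete. The paper handles this by adding one extra level $\alpha_{-1} = \alphamin - \frac{\eps}{2\pzero}$, for which $\nu_{-1} = -\infty$; membership in $U_{-1}$ then forces $\abs{x_{j,k}} < \lambda\, 2^{-\alpha_{-1} j}$ everywhere, so the top shell is properly capped. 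Equivalently, you can simply fold $\norm{b^{\alphamin-\eps}_{\infty,\infty}}{x}$ (which you already observed is $\tau$-continuous) into your constant $C$.

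Your side remark that the infimum-of-sums defining $\norm{\alpha,\eps}{\cdot}$ yields a priori only a quasi-$\pzero$-norm is well taken; the paper does not discuss this point explicitly.
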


To illustrate this theorem, notice (Figure \ref{fig:seminormnu}) that
the hypograph of $\nu$ is the intersection of the sets (parameters
domains) of $(\tilde\alpha, \beta)$ such that a sequence having
$2^{\beta j}$ coefficients $= 2^{-\tilde\alpha j}$ belongs to
$\thebesov + b^{\alpha}_{\infty, \infty}$.

\begin{figure}[ht] 
    \begin{center}
	\includegraphics[width = 0.75\textwidth]{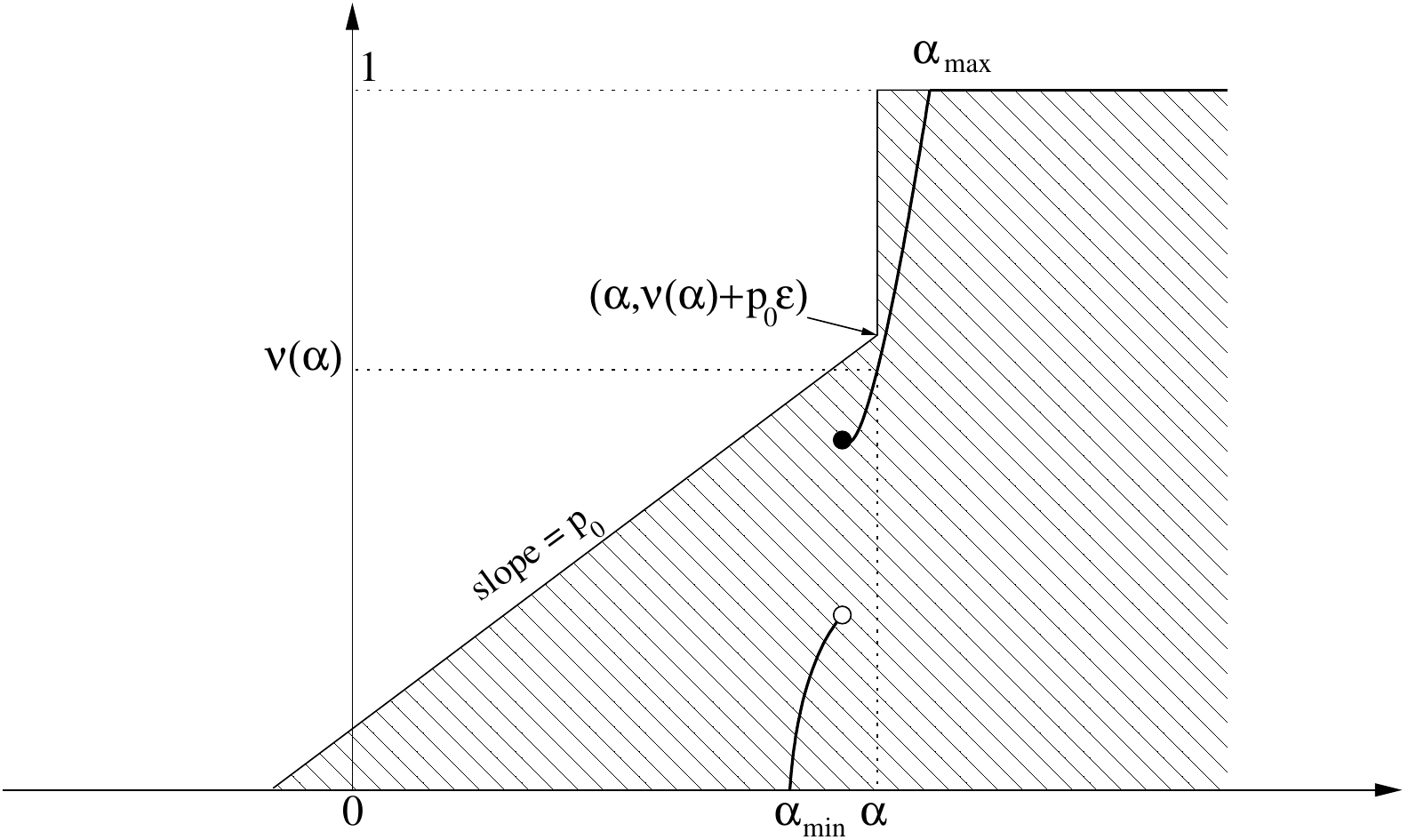}
    \end{center}
    \caption{$\nu(\alpha)$ and the parameters domain of $\thebesov +
    b^{\alpha}_{\infty, \infty}$}
    \label{fig:seminormnu}
\end{figure}

\begin{proof}
    Let $\alpha \in [\alphamin, \alphamax)$ and $\eps > 0$.  As
    in the proof of the first point of Theorem~\ref{theo:locconv}, we
    define $L$, $\alpha_{l}$ and $\nu_{l}$ for $-1\le l \le L-1$ and
    $\alpha_L \deq \alpha,\ \nu_L \deq \nu(\alpha_{L}) +
    \frac{\varepsilon}{2}$.  Let $0 < \delta < 1$ be fixed and let
    $\lambda \deq \frac{\delta}{L+2}$.  The neighbourhoods $U_{l}$
    and, for $x \in \bigcap_{l=-1}^{L} U_{l}$, the splitting $x =
    \sum_{l=0}^{L+1} x^{l}$ are unchanged.
    
    It is clear that, since $\alpha_{L} \geq \alpha$, $x'' \deq
    x^{L+1}$ belongs to $b^{\alpha}_{\infty, \infty}$ and moreover
    $\norm{b^{\alpha}_{\infty, \infty}}{x''} \leq \lambda$.
    On the other hand, with $s \deq \alpha + \frac{1 -
    \nu(\alpha)}{\pzero} - \eps$, for $0 \leq l \leq L$, each $x^{l}$
    belongs to $b^{s}_{\pzero, \infty}$ and $\norm{b^{s}_{\pzero,
    \infty}}{x^l} \leq \lambda^{\frac{\pzero+1}{\pzero}} \leq \lambda$.
    So finally $x' \deq \sum_{l=0}^{L} x^{l}$ satisfies
    $\norm{b^{s}_{\pzero, \infty}}{x'} \leq (L+1) \lambda$.  We have
    proved that by our choice of $\lambda$,
    \begin{equation*}
	\bigcap_{l=-1}^{L} U_{l} \subset \set{x \in
	\Snu}{\norm{\alpha, \eps}{x} \leq \delta}
    \end{equation*}
    in other words, that $x \mapsto \norm{\alpha,\eps}{x} $ is a
    continuous $\pzero$-norm on $S^{\nu}$.

    Now, let us show that given $\alpha \in [\alphamin, \alphamax),
    \eps > 0$ and $0 < \delta < 1$, we have
    \begin{equation*}
	\set{x\in\Snu}{ \norm{\alpha, \eps}{x} <
	\paren{\frac\delta2}^{\frac{\pzero+1}{\pzero}} } \subset
	\set{\rule{0ex}{3ex}x\in\Snu}{d_{\alpha,\nu(\alpha)+\eps}(x)
	\leq \delta }.
    \end{equation*}    
    If $\norm{\alpha, \eps}{x} <
    \paren{\frac\delta2}^{\frac{\pzero+1}{\pzero}}$, then there exist $x',
    x'' \in \Snu$ such that $x = x' + x''$, $\norm{b^{s}_{\pzero,
    \infty}}{x'} < \paren{\frac\delta2}^{\frac{\pzero+1}{\pzero}}$ and
    $\norm{b^{\alpha}_{\infty, \infty}}{x''} <
    \paren{\frac\delta2}^{\frac{\pzero+1}{\pzero}} \leq \frac\delta2$.
    
    Using the inequality between distances   $d_{\alpha,\beta}$ and
    Besov $p$-norms (Lemma~\ref{lemm:contBesov}) and the fact that
    $\sup_{j,k} 2^{\alpha j} |x''_{j, k}| \leq \frac{\delta}{2}$ implies
    $d_{\alpha, \nu(\alpha)+\eps}(x'') \le \frac{\delta}{2}$, we get
    \begin{equation*}
	d_{\alpha, \nu(\alpha) + \eps}(x) \le d_{\alpha, \nu(\alpha) +
	\eps}(x') + d_{\alpha,\nu(\alpha) + \eps}(x'')\le
	\frac{\delta}{2} + \frac{\delta}{2}=\delta
    \end{equation*}
    and we are done.  Thanks to \eqref{eq:alt} we see that if $\alpha
    \in (\alpha_{n})$ describes a dense set in $[\alphamin,
    \alphamax)$ and if $\eps \in (\eps_{m})$ has limit $0$,
    together with the $b^{\alphamin - \eps}_{\infty, \infty}$ norms,
    these $\pzero$-norms completely define the topology of $\Snu$.
\end{proof}

\section{Duality}
\label{sec:duality}

We employ the usual scalar product on the sequence space $\Omega$
\begin{equation}
    \scala{x}{y} \deq \sum_{(j, k) \in \Lambda} x_{j, k}
    \overline{y_{j, k}} 
\end{equation}
to identify the topological dual $(\Snu)'$ of $\Snu$ to a sequence
space that will be revealed by Theorem~\ref{theo:dual}.  This
identification is made as follows: for all $(j,k) \in \Lambda$, let
$e^{j,k}$ be the sequence whose only non-zero component is
$e^{j,k}_{j,k} = 1$.  Given $u \in (\Snu)'$, let us define
\begin{equation}
    y \deq \sum_{j,k \in \Lambda} \overline{u(e^{j,k})} e^{j,k}.
    \nonumber
\end{equation}
This sequence $y$ indeed satisfies $u(x) = \scala{x}{y}$ because for
all $x \in \Snu$, the sum $\sum_{j,k \in \Lambda} x_{j,k} e^{j,k}$
converges to $x$ in $\Snu$ (see~\cite{ABDJ}).

\paragraph*{Remark.}

If we keep in mind that $(x_{j, k})$ represents the wavelet
coefficients of a function, the above scalar product corresponds to
the $L^2$ scalar product if and only if the said wavelets form an
orthonormal basis of $L^2([0, 1])$.
In~\cite{Aubry:zg,Aubry:2007uq,RWS}, we use $L^\infty$-normalized
orthogonal wavelets instead (which is more convenient when one uses
wavelets to study the pointwise regularity of functions); in that
setting, the $L^2$ product of functions corresponds to the
coefficients product
\begin{equation}
    \scalb{x}{y} \deq \sum_{(j, k) \in \Lambda} 2^{-j} x_{j, k}
    \overline{y_{j, k}}.
\end{equation}
The results in this section translate easily in terms of the duality
$\scalb{\cdot}{\cdot}$ by shifting the symmetry axis in Figure
\ref{fig:dualnu} by $1/2$ to the left.

\subsection{Dual profile}
\label{sec:dualpro}

\newcommand{\profilize}[1]{\llfloor #1 \rrfloor}

Let us fix a few notations.  We shall write
\begin{equation}
    \profilize{\beta} \deq
    \begin{cases}
	-\infty & \text{ if } \beta < 0 \\
	\beta & \text{ if } 0 \leq \beta \leq 1 \\
	1 & \text{ if } \beta \geq 1.  
    \end{cases}
\end{equation}
\begin{defi}
    The \emph{dual profile} of $\nu$ is the function
    \begin{equation}
	\nu' : \alpha' \mapsto \profilize{ \alpha' + \inf
	\set{\alpha}{\nu(\alpha) - \alpha > \alpha'} }.
	\label{eq:defdp}
    \end{equation}
    As in~\eqref{eq:a0} and~\eqref{eq:amax} we define the
    corresponding $\alphamin'$ and $\alphamax'$.
\end{defi}
It is easily seen that $\alphamin' = -\alphamin$ and that $ 1 -
\alphamax \leq \alphamax' \leq 1 - \alphamin$.  Graphically,
except for the discontinuities and the part where the value $1$ is
attained, the graph of $\nu'$ is obtained by horizontal symmetry, with
respect to the axis $\beta = 2\alpha$, of the graph of $\nu$
(Figure~\ref{fig:dualnu}).  Discontinuities in $\nu$ correspond to
zones with slope $1$ in $\nu'$; zones with slope $\leq 1$ in $\nu$
correspond to right-continuous discontinuities in $\nu'$ (see
Proposition~\ref{prop:dualnu} below).

\begin{figure}[ht] 
    \begin{center}
	\includegraphics[width = 0.75\textwidth]{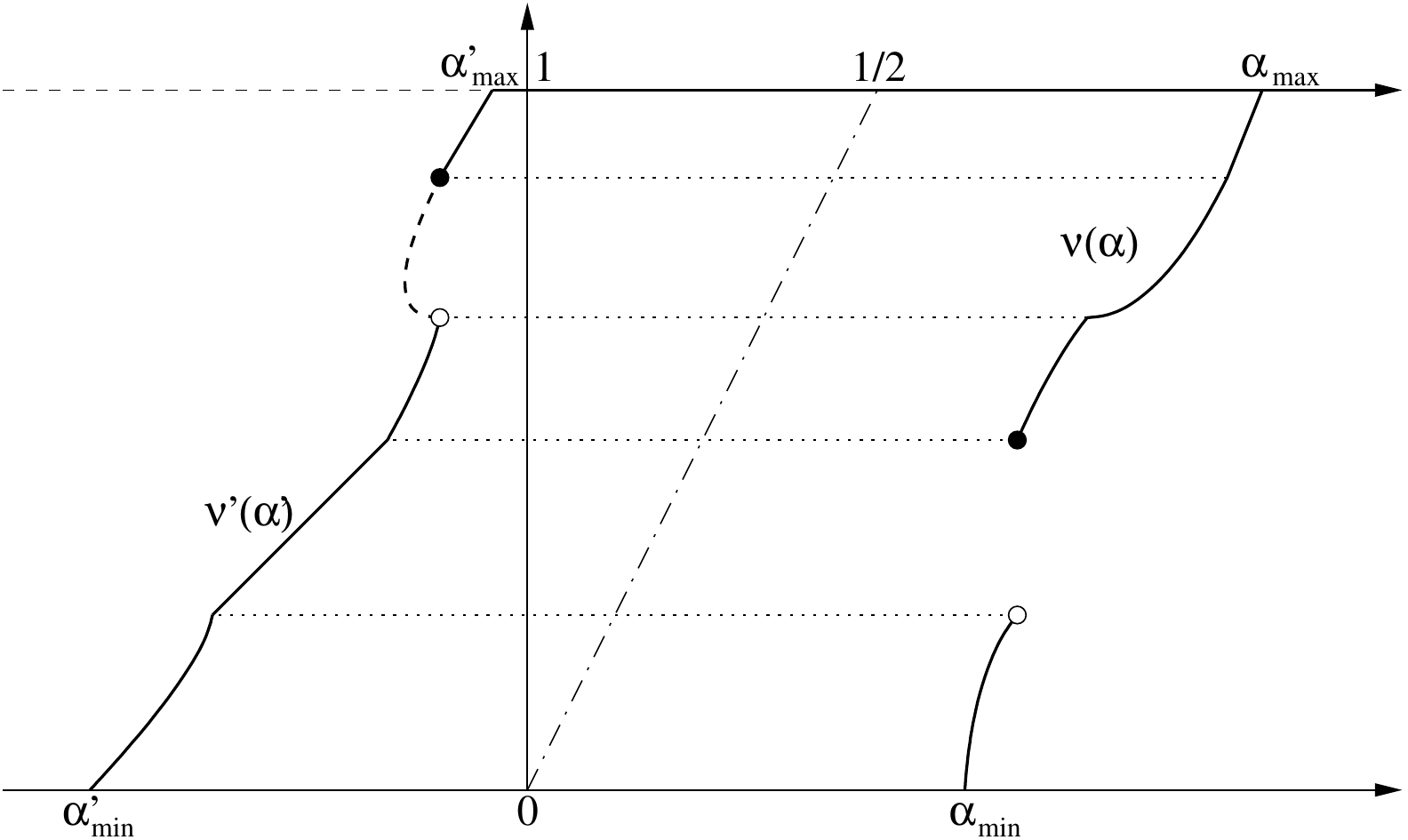}
    \end{center}
    \caption{The symmetry between $\nu$ and $\nu'$}
    \label{fig:dualnu}
\end{figure}

In the next proposition we state the less obvious properties that will
be useful to us.

\begin{propo}
    \label{prop:dualnu}
    The dual profile $\nu'$ of $\nu$ verifies
    \begin{enumerate}
	\renewcommand{\theenumi}{\roman{enumi}}
	
	\item $\nu'$ is right-continuous; \label{eq:rc}
	
	\item for all $\alpha, \alpha'$, $\alpha + \alpha' \geq
	\nu(\alpha) \wedge \nu'(\alpha')$;
	\label{eq:duality}
	
	\item if $\nu'(\alpha') = \alpha + \alpha'$ then $
	\nu'(\alpha') \leq \nu(\alpha)$;
	\label{eq:unality}
	
	\item \label{eq:slope} $\nu'$ is non-decreasing; furthermore
	for all $\alpha' \leq \alphamax'$ and $\eps \geq 0$, \\
	$\nu'(\alpha' - \eps) \leq \nu'(\alpha') - \eps$.

    \end{enumerate}
\end{propo}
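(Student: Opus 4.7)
Throughout, I will write $g(\alpha') \deq \inf\set{\alpha}{\nu(\alpha) - \alpha > \alpha'}$, so that by definition $\nu'(\alpha') = \profilize{\alpha' + g(\alpha')}$. The whole plan rests on the monotonicity and right-continuity of $g$, together with two elementary observations coming from the infimum property: (O1) if $\alpha < g(\alpha')$, then $\alpha$ is not in the defining set, hence $\nu(\alpha) \leq \alpha + \alpha'$; and (O2) if $g(\alpha') < +\infty$, there exists a sequence $\alpha_{n} \searrow g(\alpha')$ with $\nu(\alpha_{n}) > \alpha_{n} + \alpha'$, and applying right-continuity of $\nu$ to this sequence yields $\nu(g(\alpha')) \geq g(\alpha') + \alpha'$.

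For~\ref{eq:rc}, $g$ is non-decreasing (enlarging $\alpha'$ shrinks the defining set) and right-continuous: the \emph{strict} inequality in the definition of $g$ provides positive slack, so if $\alpha'_{n} \searrow \alpha'$ and $\alpha_{0} > g(\alpha')$ satisfies $\nu(\alpha_{0}) - \alpha_{0} > \alpha'$, then $\nu(\alpha_{0}) - \alpha_{0} > \alpha'_{n}$ for $n$ large, giving $g(\alpha'_{n}) \leq \alpha_{0}$; letting $\alpha_{0} \searrow g(\alpha')$ establishes right-continuity of $g$, and composition with the right-continuous truncation $\profilize{\cdot}$ preserves it. For~\ref{eq:duality}, I argue contrapositively: if $\alpha + \alpha' < \nu(\alpha)$, then $\alpha$ lies in the defining set of $g(\alpha')$, so $g(\alpha') \leq \alpha$, hence $\nu'(\alpha') \leq \alpha' + g(\alpha') \leq \alpha + \alpha'$, and so $\nu(\alpha) \wedge \nu'(\alpha') \leq \alpha + \alpha'$.

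For~\ref{eq:unality}, suppose $\nu'(\alpha') = \alpha + \alpha'$ and split on whether the truncation in $\profilize{\cdot}$ was binding. In the unsaturated case $\nu'(\alpha') \in [0, 1)$, the identity reads $g(\alpha') = \alpha$ and (O2) directly delivers $\nu(\alpha) \geq \alpha + \alpha' = \nu'(\alpha')$. In the saturated case $\nu'(\alpha') = 1 = \alpha + \alpha'$ (with $g(\alpha') < +\infty$), the bound $\alpha' + g(\alpha') \geq 1$ forces $g(\alpha') \geq \alpha$; a strict inequality $g(\alpha') > \alpha$ would combine with (O2) to produce a sequence $\alpha_{n} > \alpha$ with $\nu(\alpha_{n}) > \alpha_{n} + \alpha' > 1$, contradicting $\nu \leq 1$, so $g(\alpha') = \alpha$ and (O2) again concludes.

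For~\ref{eq:slope}, monotonicity of $\nu'$ is immediate from that of $g$ and $\profilize{\cdot}$. For the slope bound, fix $\alpha' \leq \alphamax'$ and $\eps \geq 0$; by monotonicity $g(\alpha' - \eps) \leq g(\alpha')$, so $\nu'(\alpha' - \eps) \leq \profilize{(\alpha' - \eps) + g(\alpha')}$. When $\alpha' < \alphamax'$, no upper truncation occurs (since $\alpha' + g(\alpha') < 1$), and this bound is exactly $\nu'(\alpha') - \eps$. The boundary case $\alpha' = \alphamax'$ reduces to the estimate $g(\alphamax' - \eps) \leq 1 - \alphamax'$, which I establish by picking any $\tilde\alpha' \in (\alphamax' - \eps, \alphamax')$: the definition of $\alphamax'$ forces $\nu'(\tilde\alpha') < 1$, hence $g(\tilde\alpha') < 1 - \tilde\alpha'$, and letting $\tilde\alpha' \to \alphamax'^{-}$ gives the claim. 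The main obstacle throughout is the saturated case in~\ref{eq:unality}, where the crux is exploiting the strict inequality in the definition of $g$ together with the ceiling $\nu \leq 1$ to exclude $g(\alpha') > \alpha$.
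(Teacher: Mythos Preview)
Your argument is correct and follows essentially the same route as the paper's: both proofs hinge on the elementary infimum properties you label (O1) and (O2), the paper just applies them directly to $\nu'$ while you factor through the auxiliary function $g$. You are in fact more careful than the paper about the truncation $\profilize{\cdot}$ in~\ref{eq:unality} and about the boundary case $\alpha' = \alphamax'$ in~\ref{eq:slope}, which the paper dismisses as trivial.
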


\begin{proof}
    We can suppose $\alpha' \in [\alphamin', \alphamax']$, as the
    other cases are trivial.
    
    \eqref{eq:rc}: Let $\alpha'$ be fixed.  For all $\eps > 0$, there
    exists an $\alpha$ such that $\alpha' < \nu(\alpha) - \alpha$ and
    $\nu'(\alpha') + \eps \geq \alpha + \alpha'$.  Then with $\eta
    \deq \eps \wedge \paren{ \nu(\alpha) - (\alpha + \alpha') } > 0$,
    $\zeta' < \alpha' + \eta$ implies that $\zeta' < \nu(\alpha) -
    \alpha$ as well, so $\nu'(\zeta') \leq \zeta' + \alpha < \alpha' +
    \alpha + \eta \leq \nu'(\alpha') + 2 \eps$.  This proves
    right-continuity at $\alpha'$.
    
    \eqref{eq:duality}: Given $\alpha$ and $\alpha'$, if $\nu(\alpha)
    > \alpha + \alpha'$ then $\alpha \geq \inf
    \set{\tilde\alpha}{\nu(\tilde\alpha) > \tilde\alpha + \alpha'}$
    hence $\alpha + \alpha' \geq \nu'(\alpha')$; otherwise $\alpha +
    \alpha' \geq \nu(\alpha)$.
    
    \eqref{eq:unality}: If $\nu'(\alpha') = \alpha + \alpha'$ then
    $\alpha = \inf \set{\tilde\alpha}{\nu(\tilde\alpha) > \tilde\alpha
    + \alpha'}$ thus by right-continuity $\nu(\alpha) \geq \alpha +
    \alpha' = \nu'(\alpha')$.
    
    \eqref{eq:slope} is trivial.
\end{proof}

\subsection{Topological dual of $\Snu$}
\label{sec:topodual}

For $\eps > 0$ we write $\nu'_{\eps}(\alpha') \deq \nu'(\alpha' -
\eps)$.

\begin{theo}
    \label{theo:dual}
    The topological dual of $\Snu$ is
    \begin{equation}
	(\Snu)' = \bigcup_{\eps > 0} S^{\nu'_{\eps}}.  
    \end{equation}
\end{theo}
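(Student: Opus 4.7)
The plan is to establish the two inclusions of the claimed equality separately, relying on the description of $\Snu$ and $S^{\nu'_{\eps}}$ as projective limits of $E(\alpha, \beta)$ spaces from \eqref{eq:alt} together with the duality properties of $\nu'$ collected in Proposition~\ref{prop:dualnu}.

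For the inclusion $\bigcup_{\eps > 0} S^{\nu'_{\eps}} \subseteq (\Snu)'$, I would fix $y \in S^{\nu'_{\eps}}$ for some $\eps > 0$ and show that $u_{y}(x) \deq \scala{x}{y}$ is absolutely convergent and continuous on $\Snu$. By \eqref{eq:alt}, for each $\alpha'$ and each $\delta > 0$ there is a constant uniformly bounding $\card \set{k}{\abs{y_{j, k}} \geq C 2^{-\alpha' j}}$ by $C 2^{(\nu'(\alpha' - \eps) + \delta) j}$ in $j$, and an analogous bound holds for $x \in \Snu$ with $\nu(\alpha) + \delta$ in place. At each scale $j$ I would bound $\sum_{k} \abs{x_{j, k} y_{j, k}}$ by passing to decreasing rearrangements and stratifying into dyadic bands of sizes $\sim 2^{-\alpha j}$ on the $x$ side and $\sim 2^{-\alpha' j}$ on the $y$ side. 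On each stratum, the duality inequality $\alpha + \alpha' \geq \nu(\alpha) \wedge \nu'(\alpha')$ of Proposition~\ref{prop:dualnu}\eqref{eq:duality} combined with the slope bound $\nu'(\alpha' - \eps) \leq \nu'(\alpha') - \eps$ of \eqref{eq:slope} produces a geometric factor $2^{-c j}$ with $c > 0$, provided $\delta$ is chosen small compared with $\eps$. Summing over the finite effective range of $(\alpha, \alpha')$ (governed by $[\alphamin, \alphamax]$ and $[-\alphamax, -\alphamin]$) and then over $j$ yields absolute convergence of $\scala{x}{y}$ with a bound depending only on finitely many of the $d_{\alpha_{l}, \nu(\alpha_{l}) + \eps_{l}}(x)$, hence $u_{y}$ is continuous.

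For the reverse inclusion $(\Snu)' \subseteq \bigcup_{\eps > 0} S^{\nu'_{\eps}}$, I would take $u$ continuous with associated sequence $y$; continuity yields $\alpha_{1}, \dots, \alpha_{L}$, $\eps_{1}, \dots, \eps_{L} > 0$ and $A > 0$ with $\abs{u(x)} \leq A$ whenever $d_{\alpha_{l}, \nu(\alpha_{l}) + \eps_{l}}(x) \leq 1$ for every $l$. Arguing by contradiction, if $y \notin S^{\nu'_{\eps}}$ for every $\eps > 0$, I extract a sequence $\eps_{n} \searrow 0$, points $\alpha'_{n}$, scales $j_{n} \to \infty$ and some $\delta > 0$ with $\card \set{k}{\abs{y_{j_{n}, k}} \geq 2^{-\alpha'_{n} j_{n}}} \geq 2^{(\nu'(\alpha'_{n} - \eps_{n}) + \delta) j_{n}}$. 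Then I would choose $\alpha_{n}$ close to the infimum defining $\nu'(\alpha'_{n} - \eps_{n})$, so that by Proposition~\ref{prop:dualnu}\eqref{eq:unality}, $\alpha_{n} + (\alpha'_{n} - \eps_{n}) = \nu'(\alpha'_{n} - \eps_{n}) \leq \nu(\alpha_{n})$. I build $x^{n}$ supported at scale $j_{n}$ alone by placing $\lfloor 2^{(\nu'(\alpha'_{n} - \eps_{n}) + \delta) j_{n}} \rfloor$ coefficients of size $c \, 2^{-\alpha_{n} j_{n}}$, phase-aligned with $\overline{y_{j_{n}, k}}$, at the positions of large $\abs{y_{j_{n}, k}}$. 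The inequality $\nu'(\alpha'_{n} - \eps_{n}) \leq \nu(\alpha_{n})$ and the right-continuity of $\nu$ let me choose the constant $c$ independently of $n$ so that $x^{n}$ sits in each of the finitely many unit balls defining the continuity of $u$. The pairing then satisfies $\abs{u(x^{n})} \geq c \cdot 2^{(\delta - \eps_{n}) j_{n}} \to +\infty$, contradicting continuity.

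The main obstacle is the bookkeeping in the first inclusion: the duality inequality of Proposition~\ref{prop:dualnu}\eqref{eq:duality} is only tight, not strict, so summability in $j$ requires the $\eps$-shift of \eqref{eq:slope} to manufacture a genuine gap, and one must verify that this gap survives \emph{all} choices of $(\alpha, \alpha')$ in the double stratification, including the regions where $\nu(\alpha) < \nu'(\alpha' - \eps)$ or vice versa. The extreme-parameter tails (when $\alpha$ or $\alpha'$ is very large or very small) are handled by the compactness of the effective parameter range, strata being trivial outside it. The contradiction construction in the second direction is more combinatorial than subtle, the delicate point being the absorption of the $+\delta$ into each $\eps_{l}$ slack via right-continuity so that the normalising constant $c$ can be kept uniform in $n$.
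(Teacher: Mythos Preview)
Your treatment of the inclusion $\bigcup_{\eps>0} S^{\nu'_\eps}\subseteq (\Snu)'$ is essentially the paper's: a double dyadic stratification of $x$ and $y$ by magnitude, with Proposition~\ref{prop:dualnu}\eqref{eq:duality} and the $\eps$-shift from~\eqref{eq:slope} supplying the geometric decay in $j$. That part is fine.

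The reverse inclusion, however, has a genuine gap. You claim to extract, along the sequence $\eps_n\searrow 0$, a \emph{single} $\delta>0$ such that
\[
\card\set{k}{\abs{y_{j_n,k}}\ge 2^{-\alpha'_n j_n}}\ge 2^{(\nu'(\alpha'_n-\eps_n)+\delta)j_n}
\]
for all $n$. This is not justified: the hypothesis $y\notin S^{\nu'_{\eps_n}}$ only yields, for each $n$ separately, some $\delta_n>0$, and in the borderline case $\nu_y=\nu'$ (so $y\in S^{\nu'}\setminus\bigcup_\eps S^{\nu'_\eps}$) one can have $\delta_n\le \eps_n$ by the slope property~\eqref{eq:slope}. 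Your pairing estimate $\abs{u(x^n)}\gtrsim c\,2^{(\delta-\eps_n)j_n}$ then collapses to something bounded (or even $\to 0$), and no contradiction ensues. A single-scale test sequence cannot work here precisely because the excess of $\nu_y$ over $\nu'_{\eps_n}$ may vanish with $\eps_n$.

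The paper avoids this by building one element $x\in\Snu$ supported on \emph{all} the scales $j_n$ simultaneously: at scale $j_n$ one places exactly $\lceil 2^{\nu'(\alpha'_n-\eps_n)j_n}\rceil$ phase-aligned coefficients of modulus $2^{-\alpha_n j_n}$ with $\alpha_n=\nu'(\alpha'_n-\eps_n)-\alpha'_n$, so that the contribution of scale $j_n$ to $\scala{x}{y}$ is $\ge 1$ (no $\delta$ needed). Divergence of $\scala{x}{y}$ then comes from summing infinitely many terms $\ge 1$, not from any single term being large. The remaining work is to check that this multi-scale $x$ actually lies in $\Snu$, which uses~\eqref{eq:unality} of Proposition~\ref{prop:dualnu} together with the right-continuity of $\nu$. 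You should also note that the case $\alpha'_n-\eps_n<\alphamin'$ (where $\nu'(\alpha'_n-\eps_n)=-\infty$ and your choice of $\alpha_n$ via the infimum makes no sense) needs a separate, simpler construction with a single nonzero coefficient per scale; your outline does not address it.
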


\begin{proof}
    Suppose first that $y \not \in S^{\nu'_{\eps}}$ for any $\eps > 0$
    and let us construct an $x \in \Snu$ such that $\scala{x}{y} =
    \infty$.  The hypothesis on $y$ implies that, for every
    $\varepsilon>0$, there exist $\alpha'\in\mathbb{R},\ \delta>0$
    such that $y\not\in E(\alpha',
    \nu'_{\varepsilon}(\alpha')+\delta)$; in particular, $y$ does not
    belong to any of the balls of this space.  So, given any strictly
    positive sequence $\varepsilon_n\rightarrow 0$, we thus construct
    sequences of reals $(\alpha_{n}')_{n \in \N}$ and integers
    $(j_{n})_{n \in \N}$ (the latter we can make strictly increasing)
    such that for all $n \in \N$,
    \begin{equation}
	\card\set{k}{\abs{y_{j_{n}, k}} \geq 2^{-\alpha_{n}' j_{n}}} >
	2^{\nu'(\alpha_{n}' -\eps_{n}) j_{n}}.
    \end{equation}    
    Remark that it is very possible that for some $n$,
    $\nu'(\alpha_{n}' -\eps_{n}) = -\infty$, which is equivalent to
    \begin{equation}
	\alpha_{n}'-\eps_{n} < \alphamin' = - \alphamin;
	\label{eq:nI}
    \end{equation}
    we denote by $I$ the set of such indices and by $J \deq \N
    \backslash I$ its complement.  When $n \in J$, we remark that we
    have
    \begin{equation}
	\nu'(\alpha_{n}'-\eps_{n}) = \alpha_{n}' - \eps_{n} +
	\inf\set{\alpha}{\nu(\alpha)-\alpha > \alpha_{n}' - \eps_{n}}
	\geq \alpha_{n}' - \eps_{n} + \alphamin.
	\label{eq:nJ}
    \end{equation}
    
    To construct $x$ we put
    \begin{itemize}
	
	\item for all $n \in J$, at $\ceil{2^{\nu'(\alpha_{n}'
	-\eps_{n}) j_{n}}}$ of the positions where $\abs{y_{j_{n}, k}}
	\geq 2^{-\alpha_{n}' j_{n}}$,
	\begin{equation}
	    x_{j_{n}, k} \deq 2^{-\alpha_{n} j_{n}} \frac{{y_{j_{n},
	    k}}}{\abs{y_{j_{n}, k}}} \nonumber
	\end{equation}
	with $\alpha_{n} \deq \nu'(\alpha_{n}' - \eps_{n}) -
	\alpha_{n}'$;
	
	\item for all $n \in I$, at exactly one position where
	$\abs{y_{j_{n}, k}} \geq 2^{-\alpha_{n}' j_{n}}$ we put
	$x_{j_{n},k}$ having the same expression as above, but with
	$\alpha_{n} \deq -\alpha_{n}'$;
	
    \end{itemize}
    and naturally all the other coefficients of $x$ are set equal to
    $0$.  
	
    The scalar product $\scala{x}{y}$ is divergent because for
    all $n \in \N \backslash I = J$
    \begin{equation}
	\sum_{0 \leq k < 2^{j_{n}}} x_{j_{n}, k} \overline{y_{j_{n},
	k}} \geq 2^{\nu'(\alpha_{n}' -\eps_{n}) j_{n}} 2^{(
	\alpha_{n}' - \nu'(\alpha_{n}' - \eps_{n}) ) j_{n}}
	2^{-\alpha_{n}' j_{n}} = 1  \nonumber
    \end{equation}
    whereas if $n \in I$, 
    \begin{equation}
	\sum_{0 \leq k < 2^{j_{n}}} x_{j_{n}, k} \overline{y_{j_{n},
	k}} = 2^{ \alpha_{n}' j_{n}} 2^{-\alpha_{n}' j_{n}} = 1.
	\nonumber
    \end{equation}
    
    It remains to prove that $x$ belongs to $\Snu =
    \bigcap\limits_{\alpha \in \R, \eps > 0} E(\alpha, \nu(\alpha) +
    \eps)$.  For this we consider two cases.

    \underline{Firstly, if $\alpha < \alphamin$:} In this situation,
    we have to show that $\sup_{j,k} 2^{\alpha j} \abs{x_{j,k}} <
    \infty$ or, equivalently, that
    \begin{equation*}
	(i) \quad \sup_{n\in I} 2^{\alpha j_{n}} 2^{\alpha_{n}' j_{n}}
	< \infty \quad \text{ and } \quad (i\!i) \quad \sup_{n \in J}
	2^{\alpha j_{n}} 2^{(\alpha_{n}' - \nu'(\alpha_{n}'-\eps_{n})
	) j_{n}} < \infty.
    \end{equation*}    
    When $n \in I$, by~\eqref{eq:nI} we have $\alpha_{n}' + \alpha <
    \eps_{n} + \alpha - \alphamin$, whereas when $n \in J$,
    by~\eqref{eq:nJ} we get $\alpha + \alpha_{n}' - \nu'(\alpha_{n}' -
    \eps_{n}) \leq \eps_{n} + \alpha - \alphamin$.  Since $\eps_{n}
    \to 0$ this quantity becomes negative when $n$ is large enough, so
    both (i) and (ii) hold.

    \underline{Secondly, if $\alpha \geq \alphamin$:} In that case,
    let $\beta \deq \nu(\alpha) + \eps$.  We have to show that there
    exists $C > 0$ such that for all $n$,
    \begin{equation*}
	\card\set{k}{\abs{x_{j_{n},k}} \geq C 
	    2^{-\alpha j_{n}}} \leq C 2^{\beta j_{n}}.
    \end{equation*}    
    When $n \in I$ this is trivial, since $\beta > 0$ and there is
    only one non-zero coefficient in $x$ at scale $j_{n}$.  When $n
    \in J$, either $\alpha_{n} = \nu'(\alpha_{n}'-\eps_{n}) -
    \alpha_{n}' > \alpha$, and the above cardinal is zero, or
    $\nu'(\alpha_{n}'-\eps_{n}) - \alpha_{n}' \leq \alpha$.  In that
    last case, using the right-continuity of $\nu$ we get that
    $\nu(\alpha_{n}+\eps_{n}) \leq \beta$ for $n$ large enough;
    using~\eqref{eq:unality} of Proposition~\ref{prop:dualnu} we have
    $\nu'(\alpha_{n}'-\eps_{n}) \leq \nu(\alpha_{n}+\eps_{n}) \leq
    \beta$ and the conclusion follows.  We have proved that $y \not
    \in \bigcup_{\eps > 0} S^{\nu'_{\eps}}$ cannot belong to the dual
    of $\Snu$.
    
    \newcommand{\Lprime}{L}
    \newcommand{\alphap}{\alpha'}
    
    Conversely, let $\eps > 0$ and $y \in S^{\nu'_{2\eps}}$.  We
    construct $L \deq \ceil{\frac{4}{\eps}}$ and for $-1 \leq l \leq
    L$, $\alpha_{l} \deq \alphamin + \frac{\eps}{4}l$ and $\nu_{l}
    \deq \nu(\alpha_{l}) + \frac{\eps}{4}$.  Similarly, for $-1 \leq
    l' \leq \Lprime$, $\alphap_{l'} \deq \alphamin' + 2 \eps +
    \frac{\eps}{4}l'$ and $\mu_{l'} \deq \nu'(\alphap_{l'} - 2\eps) +
    \frac{\eps}{4}$.
    
    Let $U \deq \bigcap_{l = -1}^{L} U_{l}$, where $U_{l}$ is the open
    unit ball in $E(\alpha_{l}, \nu_{l})$ and fix an $A >
    \max\limits_{{-1 \leq l' \leq \Lprime}}{d_{\alphap_{l'},
    \mu_{l'}}(y)}$.    
    We split any $x \in U$ as $x = \sum_{l = 0}^{L+1} x_{l}$, where
    for $0 \leq l \leq L$, $x_{l}$ receives the coefficients $
    2^{-\alpha_{l}j} < \abs{x_{j, k}} \leq 2^{-\alpha_{l-1}j}$ and
    $x_{L+1}$ receives the coefficients $\abs{x_{j, k}} \leq
    2^{-\alpha_{L}j}$ (since $\nu(\alpha_{-1}) = -\infty$ and $x \in
    U_{-1}$, there is no coefficient $\abs{x_{j, k}} >
    2^{-\alpha_{-1}j}$).
    
    We do the same to $y$, writing $y = \sum_{l' = 0}^{\Lprime+1}
    y_{l'}$, where for $0 \leq l' \leq \Lprime$, $y_{l'}$ receives the
    coefficients $A 2^{-\alphap_{l'}j} < \abs{y_{j, k}} \leq A
    2^{-\alphap_{l'-1} j}$ and $y_{\Lprime+1}$ receives the
    coefficients $\abs{y_{j, k}} \leq A 2^{-\alphap_{\Lprime}j}$
    (same remark as above about the coefficients $\abs{y_{j, k}} > A
    2^{-\alphap_{-1} j}$: there are none because $\mu_{-1} =
    \nu'(\alphamin'-\frac{\eps}{4}) = -\infty$).
    
    The proof now boils down to studying each term of
    \begin{equation}
	\scala{x}{y} = \sum_{l = 0}^{L+1} \sum_{l' = 0}^{\Lprime+1}
	\scala{x_{l}}{y_{l'}}.
    \end{equation}
    Four cases can be distinguished.
    
    \underline{Firstly, if $0 \leq l \leq L$, $0 \leq l' \leq \Lprime$
    and $\nu(\alpha_{l}) \leq \nu'(\alphap_{l'} - \eps)$:} Then by
    Proposition~\ref{prop:dualnu}~\eqref{eq:duality} applied to
    $\alpha_{l}$ and $\alphap_{l'}-\eps$, this means that
    \begin{math}
	\alpha_{l} + \alphap_{l'} \geq \nu(\alpha_{l}) + \eps =
	\nu_{l} + \frac{3\eps}{4}.  \nonumber
    \end{math}
    At scale $j$ in $x_{l}$, there are less than $2^{\nu_{l} j}$
    non-zero coefficients which are bounded by $2^{-\alpha_{l-1}j}$,
    so
    \begin{equation*}
	\abs{\scala{x_{l}}{y_{l'}}} \leq A \sum_{j\in\Nz} 2^{(\nu_{l}
	-\alpha_{l-1}-\alphap_{l'-1}) j} \leq A \sum_{j\in\Nz}
	2^{-\frac{\eps}{4}j}.
    \end{equation*}
    
    \underline{Secondly, if $0 \leq l \leq L$, $0 \leq l' \leq
    \Lprime$ and $\nu(\alpha_{l}) \geq \nu'(\alphap_{l'} - \eps)$:}
    Then by Proposition~\ref{prop:dualnu}~\eqref{eq:duality} once
    again we get
    \begin{math}
	\alpha_{l} + \alphap_{l'} \geq \nu(\alpha_{l}) \wedge
	\nu'(\alphap_{l'}) \geq \nu'(\alphap_{l'} - \eps) \geq
	\nu'(\alphap_{l'} - 2 \eps) + \eps = \mu_{l'} +
	\frac{3\eps}{4}.  \nonumber
    \end{math}
    At scale $j$, in $y_{l'}$ there are less than $A 2^{\mu_{l'} j}$
    non-zero coefficients which are bounded by $A 2^{-\alphap_{l'-1}
    j}$, so
    \begin{equation*}
	\abs{\scala{x_{l}}{y_{l'}}} \leq A^2 \sum_{j\in\Nz}
	2^{(\mu_{l'} -\alpha_{l-1}-\alphap_{l'-1}) j} \leq A^2
	\sum_{j\in\Nz} 2^{-\frac{\eps}{4}j}.
    \end{equation*}
    
    \underline{Thirdly, if $l = L+1$ and $0 \leq l' \leq \Lprime+1$:}
    Since $\alphap_{l'-1} \geq -\alphamin + \frac{7 \eps}{4}$ and
    $\alpha_{L} \geq \alphamin + 1$, we get by a direct computation
    \begin{equation*}
	\abs{\scal{x_{L+1}}{y_{l'}}} \leq A \sum_{j \in \Nz}
	2^{(1-\alpha_{L} - \alphap_{l'-1}) j} \leq A \sum_{j \in \Nz}
	2^{-\frac{7\eps}{4}j}.
    \end{equation*}    
    
    \underline{Finally, if $0 \leq l \leq L$ and $l' = \Lprime+1$:}
    Since $\alpha_{l-1} \geq \alphamin - \frac{\eps}{4}$ and
    $\alphap_{\Lprime} \geq \alphamin' + 1 + 2 \eps$, we obtain
    \begin{equation*}
	\abs{\scal{x_{l}}{y_{\Lprime+1}}} \leq A^{2} \sum_{j \in \Nz}
	2^{(1-\alpha_{l-1} - \alphap_{\Lprime}) j} \leq A^{2} \sum_{j
	\in \Nz} 2^{-\frac{7\eps}{4}j}.
    \end{equation*}
    
    In the end, $\scal{x}{y}$ is bounded on $U$, the bound depending
    only on $A$ and $\eps$ (that is, only on $y$).  This proves that
    the linear form $x \mapsto \scala{x}{y}$ is continuous.
    
\end{proof}

\subsection{Strong topology on $(\Snu)'$}
\label{sec:topodl}

In the previous theorem, the dual of $\Snu$ has been algebraically
identified to a union of spaces $S^{\nu'_{\eps}}$, for $\eps > 0$, or
equivalently to a countable union of spaces $S^{\nu_{m}'}$ for
$\nu_{m}' \deq \nu_{\eps_{m}}'$, $\eps_{m} \searrow 0$.  As such, it can
be endowed with the inductive limit topology on this union, now
written $\ind_{m} S^{\nu_{m}'}$.  We shall now see that, at least 
when the convexity index~\eqref{eq:pzero}
$\pzero = 1$, this topology is actually the same as the strong
topology on the dual (then written $(\Snu)'_{b}$ in the standard
notation), that is, the topology of uniform convergence on the bounded
sets of $\Snu$.

Before that, recalling that a Montel space is a barrelled tvs in which
every bounded set is relatively compact, we give another remarkable
property of $\Snu$.

\begin{propo}
    If $\pzero =1$, then $\Snu$ is a Fréchet-Montel space.
\end{propo}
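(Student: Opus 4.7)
The plan is as follows. Corollary~\ref{coro:frechet} already shows that $\Snu$ is a Fr\'echet space when $\pzero=1$, so it remains only to prove the Montel property, i.e.\ that every bounded subset of $\Snu$ is relatively compact. As $\Snu$ is metrizable, it suffices to extract from any bounded sequence $(x^{(n)})_{n\in\N}$ a subsequence converging in $\Snu$. Boundedness in $\Snu$ gives, for every $\alpha$ and $\eps>0$, a constant $C_{\alpha,\eps}<\infty$ with $d_{\alpha,\nu(\alpha)+\eps}(x^{(n)}) \leq C_{\alpha,\eps}$ for all $n$; in particular, choosing some $\alpha_0<\alphamin$, the bound in $E(\alpha_0,-\infty)=b^{\alpha_0}_{\infty,\infty}$ yields $\abs{x^{(n)}_{j,k}} \leq C 2^{-\alpha_0 j}$ uniformly in $n$, so every coordinate sequence is bounded in $\C$. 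A Cantor diagonal argument then produces a subsequence (still denoted $x^{(n)}$) converging pointwise to some $x^{*}\in\Omega$. A Fatou-type argument---if $\card\set{k}{\abs{x^{*}_{j,k}} \geq C' 2^{-\alpha j}}$ ever exceeded $C' 2^{\beta j}$ for some $C'>C_{\alpha,\eps}$, pointwise convergence would transfer this to $x^{(n)}$ for large $n$, contradicting the bound---gives $d_{\alpha,\beta}(x^{*}) \leq \liminf_n d_{\alpha,\beta}(x^{(n)})$ for every $\alpha$ and $\beta$, hence $x^{*}\in\Snu$.

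The crux is to show that the extracted subsequence actually converges to $x^{*}$ in the topology of $\Snu$. Set $y^{(n)} \deq x^{(n)}-x^{*}$: this sequence tends to $0$ pointwise and is still bounded in $\Snu$. Fix $\alpha,\eps$ and a target $\delta>0$. In the nontrivial case $\alphamin \leq \alpha < \alphamax$, right-continuity of $\nu$ supplies $\eta>0$ with $\nu(\alpha+\eta)<\nu(\alpha)+\eps/3$, and boundedness furnishes $d_{\alpha+\eta,\nu(\alpha+\eta)+\eps/3}(y^{(n)}) \leq C$ uniformly in $n$. Pick a threshold $J$---depending on $\delta,\eta,\eps,C$ but not on $n$---large enough that $C 2^{-\eta J}\leq\delta$ and $C 2^{-(\eps/3)J}\leq\delta$. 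At every scale $j\geq J$, the first inequality gives the inclusion $\set{k}{\abs{y^{(n)}_{j,k}} \geq \delta 2^{-\alpha j}} \subseteq \set{k}{\abs{y^{(n)}_{j,k}} \geq C 2^{-(\alpha+\eta)j}}$, so this cardinal is bounded by $C 2^{(\nu(\alpha)+2\eps/3)j}$; the second inequality then yields $C 2^{(\nu(\alpha)+2\eps/3)j} \leq \delta 2^{(\nu(\alpha)+\eps)j}$. For the finitely many scales $j<J$, there are at most $2^{J}$ coordinates in total, and pointwise convergence makes $\abs{y^{(n)}_{j,k}} < \delta 2^{-\alpha j}$ at each of them for $n$ large. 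Hence $d_{\alpha,\nu(\alpha)+\eps}(y^{(n)}) \leq \delta$ eventually. The boundary cases $\alpha<\alphamin$ (where $\sup_{j,k} 2^{\alpha j}\abs{y^{(n)}_{j,k}}$ must be controlled) and $\alpha\geq\alphamax$ (where the constraint $\card\set{k}{\ldots} \leq \delta 2^{(1+\eps)j}$ is automatic as soon as $2^{j}\leq\delta 2^{(1+\eps)j}$) are handled analogously.

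The principal obstacle is this last step. Pointwise convergence of a bounded sequence is strictly weaker than convergence in any single $E(\alpha,\nu(\alpha)+\eps)$, so a uniform bound in the slightly finer space $E(\alpha+\eta,\nu(\alpha+\eta)+\eps/3)$---supplied by right-continuity of $\nu$---must be exploited to control high scales uniformly in $n$, while the low scales are handled by the finiteness of their coordinate set together with pointwise convergence.
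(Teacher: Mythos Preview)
Your argument is correct. The paper's own proof is much shorter because it invokes an external result: Proposition~6.2 of~\cite{ABDJ} states that a subset of $\Snu$ is compact if and only if it is closed and bounded for each distance $d_{\alpha_{n},\nu(\alpha_{n})+\eps_{m}}$, and the paper then simply observes that this metric boundedness coincides with topological boundedness in $\Snu$. Your proof, by contrast, is self-contained: you reprove the essential content of that cited compactness criterion directly, extracting a pointwise-convergent subsequence via diagonalization and then upgrading pointwise convergence to $\Snu$-convergence by exploiting a uniform bound in a slightly sharper space $E(\alpha+\eta,\nu(\alpha+\eta)+\eps/3)$, obtained from right-continuity of $\nu$, to control the tail scales uniformly in $n$. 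This is exactly the mechanism underlying the result the paper cites, so the mathematics is the same; what differs is that the paper delegates the work to~\cite{ABDJ} while you carry it out explicitly. Your route costs more space but buys independence from the earlier paper.
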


\begin{proof}
    In~\cite[Proposition 6.2]{ABDJ} we obtained the following
    characterization: A subset $K$ of $\Snu$ is compact if and only if
    it is closed and bounded for each of the distances $d_{\alpha_{n},
    \nu(\alpha_{n}) + \eps_{m}}$ (cf.~\eqref{eq:dab},~\eqref{eq:alt}).
    The special structure of the topology of $\Snu$ show that the
    latter condition is in fact equivalent to saying that $K$ is
    bounded in $\Snu$.  So, when $\pzero = 1$, the tvs $\Snu$ is a
    Fréchet space in which the bounded sets are relatively compact.
\end{proof}

\begin{coro}
    $\Snu$ is reflexive if and only if $\pzero = 1$.
\end{coro}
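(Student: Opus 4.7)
The plan has two directions. The forward direction is immediate: when $\pzero = 1$, the preceding proposition states that $\Snu$ is a Fr\'echet--Montel space, and it is a classical fact (see e.g.~\cite{Jarchow:1981sp}) that every Fr\'echet--Montel space is reflexive, since in such a space bounded sets are relatively compact and the strong bidual therefore coincides with the original space.

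For the converse, I would argue contrapositively: assume $\pzero < 1$ and deduce that $\Snu$ cannot be reflexive. The key observation is that reflexivity (in the standard sense, i.e.\ that the canonical evaluation map $\Snu \to (\Snu)''_{b}$ is a topological isomorphism) forces the space to be locally convex, because $(\Snu)'_{b}$ is locally convex and hence so is its strong dual $(\Snu)''_{b}$. Thus if $\Snu$ fails to be locally convex, it cannot be linearly homeomorphic to its bidual, and reflexivity is ruled out.

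Now apply the second point of Theorem~\ref{theo:locconv} with $p = 1$: since $\pzero < 1$, the space $\Snu$ is not locally $1$-convex, i.e.\ it is not locally convex in the usual sense. Combining this with the previous paragraph yields the desired conclusion that $\Snu$ is not reflexive.

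The main (minor) obstacle is conceptual rather than computational: one needs to invoke cleanly the fact that the bidual construction automatically produces a locally convex space, so non--local convexity of $\Snu$ obstructs reflexivity. Once this is granted, both implications reduce to results already established in the paper (Theorem~\ref{theo:locconv} and the Fr\'echet--Montel proposition), and no further calculation with $\nu$ or $\pzero$ is needed.
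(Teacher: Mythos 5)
Your proof is correct and follows essentially the same route as the paper: the forward direction via the Fr\'echet--Montel proposition, and the converse via the observation that reflexivity forces local convexity, which Theorem~\ref{theo:locconv} rules out when $\pzero<1$. The paper's own converse is terser (it simply notes that a reflexive space is by definition locally convex), whereas you justify this through the local convexity of the strong bidual; this is a harmless elaboration of the same idea.
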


\begin{proof}
    Every Fréchet-Montel space is reflexive; conversely a reflexive
    space is by definition necessarily locally convex.
\end{proof}

\begin{theo}
    \label{theo:topodl}
    If $\pzero =1$, then topologically $(\Snu)'_{b} = \ind_{m}
    S^{\nu_{m}'}$.
\end{theo}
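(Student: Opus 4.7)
The plan is to verify that the identity map between the two topologies on $(\Snu)' = \bigcup_m S^{\nu_m'}$ (algebraically identified by Theorem~\ref{theo:dual}) is bicontinuous. The hypothesis $\pzero = 1$ makes $\Snu$ a Fr\'echet-Montel space by the preceding proposition; in particular $\Snu$ is barrelled and reflexive, and its strong dual is Montel and hence bornological. On the dual side, Proposition~\ref{prop:dualnu}~\eqref{eq:slope} applied with $\beta=\alpha'+h$ and $\eps=h$ gives $\rightinfder \nu'(\alpha') \geq 1$, so the convexity index of each $\nu_m'$ equals $1$, each $S^{\nu_m'}$ is Fr\'echet, and $\ind_m S^{\nu_m'}$ is an LF-space, hence itself bornological. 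It will therefore suffice to establish two things: (i) continuity of $\ind_m S^{\nu_m'} \to (\Snu)'_b$, and (ii) equality of their bounded subsets; bornology on both sides will then force the two topologies to coincide.

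For (i), by the universal property of inductive limits it is enough to show each inclusion $S^{\nu_m'} \hookrightarrow (\Snu)'_b$ continuous, i.e.\ that for every bounded $B \subset \Snu$ the seminorm $q_B(y) \deq \sup_{x \in B} \abs{\scala{x}{y}}$ is continuous on $S^{\nu_m'}$. I would rerun the quantitative estimate from the converse half of the proof of Theorem~\ref{theo:dual}, with ``$x \in U$'' replaced by ``$x \in B$''. Boundedness of $B$ in $\Snu$ supplies constants $C_l$ with $d_{\alpha_l,\nu_l}(x) \leq C_l$, and the same decomposition $x = \sum_l x_l$ together with $y = \sum_{l'} y_{l'}$ followed by the four-case analysis yields a bound of the form $\abs{\scala{x}{y}} \leq K(B,\eps)(A+A^2)$ where $A$ bounds the distances $d_{\alpha'_{l'},\mu_{l'}}(y)$ appearing in the natural topology of $S^{\nu_m'}$. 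This is manifestly a continuous function of $y$ on $S^{\nu_m'}$.

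For (ii), one inclusion is free from (i): anything bounded in $\ind_m S^{\nu_m'}$ is automatically bounded in $(\Snu)'_b$. Conversely, because $\Snu$ is barrelled, a bounded $M \subset (\Snu)'_b$ is equicontinuous, hence contained in a polar $V^\circ$ for a standard neighborhood $V = \bigcap_{l=1}^L U_l \subset \Snu$. What remains, and is the principal obstacle, is a uniform version of the forward part of Theorem~\ref{theo:dual}: I need to show that every $y \in V^\circ$ lies in a single space $S^{\nu_m'}$ with uniform bounds on its distances $d_{\alpha',\mu}$ depending only on $V$. I plan to extract this as the quantitative contrapositive of the scale-by-scale divergent-pairing construction in that proof. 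Given finitely many parameters $(\alpha_l,\nu_l)$ describing $V$, one selects $\eps_m$ small enough to absorb the various $\eps$-cushions in the construction; then, should some $y \in V^\circ$ violate a $d_{\alpha',\mu}$-bound by a large factor at some scale $j$, the hand-crafted sequence at that scale used to build the diverging $x$, truncated and renormalized to lie in $V$, would pair with $y$ above $1$, contradicting $y \in V^\circ$. Combining the bounds scale by scale gives the desired uniform control, placing $M$ inside a bounded subset of some fixed $S^{\nu_m'}$ and closing the argument via bornology.
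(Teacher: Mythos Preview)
Your step~(i) is exactly the paper's first half. For the reverse direction, however, the paper takes a much shorter and genuinely different route: it applies De~Wilde's closed graph theorem to the identity $(\Snu)'_b \to \ind_m S^{\nu_m'}$. The strong dual $(\Snu)'_b$ is ultrabornological (as the strong dual of a Fr\'echet--Montel space) and the inductive limit, being an LF-space, is webbed; since the identity has closed graph it is continuous. This dispatches the reverse inclusion in one line, with no quantitative analysis of polars or bounded sets at all.

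Your bornological strategy is a legitimate alternative, and if carried through it would yield the bonus information that $\ind_m S^{\nu_m'}$ is a \emph{regular} inductive limit. But the ``quantitative contrapositive'' of the forward half of Theorem~\ref{theo:dual} that you need in~(ii) is genuinely new work rather than a routine extension: in that proof one is free to let $\eps_n \to 0$ and exploit infinitely many scales $j_n$, whereas here you must extract, from a polar $V^\circ$ determined by \emph{finitely} many parameters $(\alpha_l,\nu_l,\eps_l)$, uniform bounds on $d_{\alpha',\nu_m'(\alpha')+\delta}(y)$ valid for \emph{all} $(\alpha',\delta)$ and a single $m$. Your single-scale test-sequence idea is plausible, but the proposal does not explain how the finitely many constraints of $V$ pin down one $\eps_m$ that works uniformly; this is precisely the delicate point, and it is left as a sketch. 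The paper's closed-graph argument sidesteps the issue entirely.

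One small correction: ``Montel and hence bornological'' is not a valid implication in general (Montel means barrelled plus the Heine--Borel property, and barrelled does not imply bornological). The correct fact, which the paper invokes explicitly, is that the strong dual of a Fr\'echet--Montel space is ultrabornological.
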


\begin{proof}
    The fact that the canonical injection $S^{\nu'_m}\ \rightarrow
    \left(\Snu\right)'_b$ is continuous for every $m$ is obtained
    using the characterization of the bounded sets of $\Snu$ and a
    part of the proof leading to the algebraic description of the
    dual.  Indeed, if $m$ is fixed and if $B$ is a bounded set of
    $\Snu$, we replace the unit balls $U_l$ by balls of radius $R>0$
    so that $B\subset U \deq \bigcap_{l=-1}^LU_l$ (with $L$ depending
    on $m$).  The same proof then shows that $\scala{x}{y} \rightarrow
    0$ when $y\rightarrow 0$ in $S^{\nu'_m}$, uniformly on $B$.  This
    proves that the inductive limit topology is stronger than the
    strong topology (and as a consequence, the inductive limit is
    Hausdorff).

    To prove that the topologies are in fact equivalent, we use the
    closed graph theorem of De Wilde (see~\cite{Jarchow:1981sp}
    or~\cite{Meise:1997fk}) for the identity map from the strong dual
    into the inductive limit: the strong dual is ultrabornological
    (since it is the strong dual of a Fréchet-Montel space) and the
    inductive limit is a webbed space.  Since the identity has a
    closed graph, it is continuous.
\end{proof}

So far the case $\pzero < 1$ remains an open problem (the missing point
is to show that the strong dual is ultrabornological or Baire, in
order to be able to apply the closed graph theorem).

\subsection{Dual of an intersection of Besov spaces}
\label{sec:dualB}

We conclude with an application to a particular case.  As we recalled
earlier, when $\nu$ is concave we have
\begin{equation}
    S^{\nu} = \bigcap_{\eps > 0, p > 0} b^{\eta(p)/p - \eps}_{p,
    \infty} \nonumber
\end{equation}
with~\eqref{eq:concon} $\eta(p) \deq \inf_{\alpha \geq \alphamin}
\paren{ \alpha p - \nu(\alpha) + 1}$.  If we invert this
Fenchel-Legendre transform we obtain
\begin{equation}
    \nu(\alpha) = \inf_{p > 0} \paren{ \alpha p - \eta(p) + 1 }.
    \label{eq:nocnoc}
\end{equation}
In that case the dual profile $\nu'$ is convex on $[\alphamin',
\alphamax')$ and can be directly computed from $\eta$ as shown in
Proposition~\ref{prop:db} below.  Then by Theorems~\ref{theo:dual}
and~\ref{theo:topodl} we know all about the strong topological dual of
this intersection of Besov spaces.

We shall keep in mind that the concavity of $\nu$ implies that it is now
continuous and that the right derivative $\partial^{+} \nu(\alpha)$
exists for all $\alpha \geq \alphamin$.
\begin{lemm}
    \label{lemm:plouf}
    If $\nu(\alpha) = \alpha p - \eta(p) + 1$ for some $p > 0$, then
    $\partial^{+}\nu(\alpha) \leq p$.
\end{lemm}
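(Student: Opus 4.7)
The plan is to directly unfold the Fenchel--Legendre duality between $\nu$ and $\eta$. By definition,
\begin{equation*}
    \eta(p) = \inf_{\alpha'' \geq \alphamin} \paren{\alpha'' p - \nu(\alpha'') + 1},
\end{equation*}
so the hypothesis $\nu(\alpha) = \alpha p - \eta(p) + 1$ rewrites as $\alpha p - \nu(\alpha) + 1 = \eta(p)$, which asserts that the infimum defining $\eta(p)$ is attained (or, more precisely, matched) at the point $\alpha$. Note that $\alpha \geq \alphamin$ automatically, since $\nu(\alpha)$ must be finite.

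The first substantive step is to compare the value at $\alpha$ with the value at a nearby point. For any $\alpha'' \geq \alphamin$, the definition of the infimum yields
\begin{equation*}
    \alpha'' p - \nu(\alpha'') + 1 \geq \eta(p) = \alpha p - \nu(\alpha) + 1,
\end{equation*}
which rearranges into the slope inequality $\nu(\alpha'') - \nu(\alpha) \leq p (\alpha'' - \alpha)$. I would then specialize to $\alpha'' = \alpha + h$ with $h > 0$ (allowed since $\alpha + h \geq \alphamin$), divide by $h$, and obtain
\begin{equation*}
    \frac{\nu(\alpha + h) - \nu(\alpha)}{h} \leq p \qquad \text{for all } h > 0.
\end{equation*}

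To conclude I would invoke the concavity of $\nu$ that is the standing hypothesis of this section: the difference quotient $h \mapsto (\nu(\alpha + h) - \nu(\alpha))/h$ is non-increasing in $h > 0$, so its limit as $h \to 0^+$ exists and equals $\partial^{+} \nu(\alpha)$. Passing to the limit preserves the inequality, giving $\partial^{+} \nu(\alpha) \leq p$. There is essentially no obstacle here: the lemma is a routine manifestation of the subgradient characterization of Fenchel conjugates, and the only point requiring a moment's care is verifying that $\alpha \geq \alphamin$ so that $\alpha + h$ stays in the admissible range; this is immediate from finiteness of $\nu(\alpha)$.
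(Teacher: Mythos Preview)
Your proof is correct and follows essentially the same idea as the paper's: both derive the inequality $\nu(\alpha+h)-\nu(\alpha)\leq ph$ for all $h>0$, whence $\partial^{+}\nu(\alpha)\leq p$. The only cosmetic difference is which side of the Fenchel--Legendre duality is invoked: you use the definition~\eqref{eq:concon} of $\eta(p)$ as an infimum over $\alpha''$ (the hypothesis says this infimum is attained at $\alpha$, hence the slope bound), whereas the paper uses the inverse formula~\eqref{eq:nocnoc}, writing $\nu(\alpha+h)=\inf_{\tilde p}\bigl((\alpha+h)\tilde p-\eta(\tilde p)+1\bigr)\leq(\alpha+h)p-\eta(p)+1=\nu(\alpha)+hp$. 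Both routes are one line long and equivalent in spirit.
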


\begin{proof}
    Let $h > 0$, and observe that 
    \begin{equation*}
	\nu(\alpha + h) = \inf_{\tilde p > 0} \paren{(\alpha + h)
	\tilde p - \eta(\tilde p) + 1} \leq (\alpha + h) p - \eta(p) +
	1 = \nu(\alpha) + h p
    \end{equation*}
    and the conclusion follows readily.
\end{proof}

\begin{propo}
    \label{prop:db}
    If $\nu$ is concave and $\eta$ is its conjugate, then the function
    $\eta'$ defined for $p' > 1$ by
    \begin{equation}
	\eta'(p') \deq (p'-1) \paren{1-\eta\paren{\frac{p'}{p'-1}}} +
	1
	\label{eq:edl}
    \end{equation}
    is convex and for $\alpha' \in [\alphamin', \alphamax')$ we have
    \begin{equation}
	\nu'(\alpha') = \sup_{p' > 1} \paren{ \alpha' p' - \eta'(p') +
	1 }.
	\label{eq:ndl}
    \end{equation}
\end{propo}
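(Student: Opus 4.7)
The plan is to establish the two assertions of the proposition separately. For the convexity of $\eta'$, I would first rewrite~\eqref{eq:edl} as $\eta'(p') = p' - (p'-1)\eta(p'/(p'-1))$. Since $\eta$ is concave (being defined in~\eqref{eq:concon} as an infimum of functions affine in $p$), its perspective $(s, t) \mapsto t \eta(s/t)$ is jointly concave on $\{t > 0\}$; restricting to the line $(s, t) = (p', p'-1)$ shows that $p' \mapsto (p'-1)\eta(p'/(p'-1))$ is concave on $(1, \infty)$, and adding the linear term $p'$ to its negative preserves convexity.

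For the identity~\eqref{eq:ndl}, my strategy is to exploit the change of variable $q \deq p'/(p'-1)$, a decreasing involution of $(1, \infty)$.  Substituting $\eta(q) = \inf_{\alpha}(\alpha q - \nu(\alpha) + 1)$ into the definition of $\eta'$ and rearranging yields
\begin{equation*}
    \alpha' p' - \eta'(p') + 1 = \inf_{\alpha \geq \alphamin} \paren{p'(\alpha + \alpha' - \nu(\alpha)) + \nu(\alpha)},
\end{equation*}
so that, after a short algebraic manipulation,
\begin{equation*}
    \sup_{p' > 1}(\alpha' p' - \eta'(p') + 1) = \alpha' + \sup_{q > 1} \frac{\eta(q) + \alpha' - 1}{q - 1}.
\end{equation*}
The hypothesis $\alpha' \in [\alphamin', \alphamax')$ makes the floor in~\eqref{eq:defdp} inactive, so it remains to show that the last supremum equals $\alpha_{0} \deq \inf\set{\alpha}{\nu(\alpha) - \alpha > \alpha'}$ (whence $\nu'(\alpha') = \alpha' + \alpha_{0}$).

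For this last identification I would proceed in two steps.  First, continuity of the concave $\nu$ forces $\nu(\alpha_{0}) = \alpha_{0} + \alpha'$; then~\eqref{eq:nocnoc} gives $\eta(q) \leq \alpha_{0} q - \nu(\alpha_{0}) + 1 = \alpha_{0}(q - 1) - \alpha' + 1$ for every $q > 0$, which is exactly the upper bound.  For the reverse inequality, I would set $q_{0} \deq \rightinfder \nu(\alpha_{0})$.  The very definition of $\alpha_{0}$ (combined with the concavity of $\nu - \alpha$) implies that $\nu(\alpha) - \alpha$ must strictly exceed $\alpha'$ just to the right of $\alpha_{0}$, forcing $q_{0} > 1$ strictly; moreover, $q_{0}$ is a subgradient of $\nu$ at $\alpha_{0}$, so the Legendre inversion is saturated at $q = q_{0}$, giving $\nu(\alpha_{0}) = \alpha_{0} q_{0} - \eta(q_{0}) + 1$ and hence $\frac{\eta(q_{0}) + \alpha' - 1}{q_{0} - 1} = \alpha_{0}$.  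The main delicate point is the passage from the range condition $\alpha' < \alphamax'$ to the strict slope inequality $q_{0} > 1$, which boils down to unfolding the definition of $\alphamax'$ through the dual profile and using the concavity and continuity of $\nu$; Lemma~\ref{lemm:plouf} provides the complementary estimate $\rightinfder \nu(\alpha_{0}) \leq q_{0}$ that makes the saturation argument rigorous.
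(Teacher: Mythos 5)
Your convexity argument for $\eta'$ is correct and in fact cleaner than the paper's: where the paper checks midpoint convexity by a direct computation (choosing $p''$ as a suitable convex combination of $p_{1}$ and $p_{2}$), you invoke the joint concavity of the perspective $(s,t)\mapsto t\,\eta(s/t)$ of the concave $\eta$ and restrict to the line $(s,t)=(p',p'-1)$. Likewise, the change of variables $q\deq p'/(p'-1)$ giving $\sup_{p'>1}(\alpha'p'-\eta'(p')+1)=\alpha'+\sup_{q>1}\frac{\eta(q)+\alpha'-1}{q-1}$ is exactly the paper's computation.

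The gap is in the identification of $\sup_{q>1}\frac{\eta(q)+\alpha'-1}{q-1}$ with $\alpha_{0}\deq\inf\set{\alpha}{\nu(\alpha)-\alpha>\alpha'}$. You assert that continuity of the concave $\nu$ forces $\nu(\alpha_{0})=\alpha_{0}+\alpha'$; this is false when $\alpha_{0}=\alphamin$ and $\nu(\alphamin)-\alphamin>\alpha'$ strictly, a situation that genuinely occurs for $\alpha'\in[\alphamin',\alphamax')$ since continuity of $\nu$ on $[\alphamin,+\infty)$ says nothing at the left endpoint, where $\nu$ jumps from $-\infty$. Concretely, take $\alphamin=0$, $\nu(\alpha)=(1+\alpha)/2$ on $[0,1)$ and $\nu=1$ beyond; for $\alpha'=1/4$ one has $\alpha_{0}=0$ but $\nu(0)=1/2>1/4$. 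In that situation your lower bound collapses: $q_{0}=\rightinfder\nu(\alpha_{0})=1/2$ is not $>1$ (the inference ``$\nu(\alpha)-\alpha$ exceeds $\alpha'$ just to the right of $\alpha_{0}$, hence $q_{0}>1$'' needs the starting value $\nu(\alpha_{0})-\alpha_{0}=\alpha'$, which is exactly what fails), there is no saturating $q$, and the supremum is only approached as $q\to+\infty$ (equivalently $p'\to1^{+}$), where it equals $\alphamin=\alpha_{0}$. This is precisely why the paper's proof splits into two cases according to whether $\nu(\tilde\alpha)-\tilde\alpha>\alpha'$ or $=\alpha'$, handling the first by showing that $p'\mapsto\alpha'p'-\eta'(p')+1$ is decreasing with limit $\alpha'+\alphamin$ as $p'\to1^{+}$. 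Your argument as written covers only the second case (and even there you should guard against $\rightinfder\nu(\alpha_{0})=+\infty$ when $\alpha_{0}=\alphamin$); also, the closing appeal to Lemma~\ref{lemm:plouf} for ``$\rightinfder\nu(\alpha_{0})\leq q_{0}$'' is circular, since you defined $q_{0}$ to be that very derivative.
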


The appearance of~\eqref{eq:edl} should not be surprising if one
notices, as an easy consequence of~\eqref{eq:pnb} and Hölder's
inequality, that when $p > 1$ the dual of $b_{p, 1}^{\eta(p)/p}$ is
just $b_{p', \infty}^{\eta'(p')/p'}$, with $\frac{1}{p} + \frac{1}{p'}
= 1$.

\begin{proof}
    Let $p_{1}, p_{2} > 1$ and let $p_{1}' \deq
    \frac{p_{1}}{p_{1}-1}$, $p_{2}' \deq \frac{p_{2}}{p_{2}-1}$.  Let
    $p' \deq \frac{p_{1}' + p_{2}'}{2}$, and $p'' \deq \frac{p'}{p'-1}
    = \frac{p_{2}-1}{p_{1}+p_{2}-2} p_{1} + \frac{p_{1}-1}{p_{1} +
    p_{2} - 2} p_{2}$.  We have
    \begin{align*}
	\eta'(p') & = (p'-1)(1-\eta(p'')) + 1 \\
	\intertext{using the concavity of $\eta$,} &\leq (p'-1)
	\paren{1- \frac{p_{2}-1}{p_{1}+p_{2}-2} \eta(p_{1}) -
	\frac{p_{1}-1}{p_{1} + p_{2} - 2} \eta(p_{2})} + 1 \\
	&\leq \frac{p'-1}{p_{1}+p_{2}-2}
	\paren{(p_{2}-1)(1-\eta(p_{1})) + (p_{1}-1)(1-\eta(p_{2}))} +
	1 \\
	&\leq \frac{p_{1}'-1}{2} (1-\eta(p_{1})) +
	\frac{p_{2}'-1}{2}(1-\eta(p_{2})) + 1 \\
	&\leq \frac{\eta'(p_{1}') + \eta'(p_{2}')}{2}
    \end{align*}
    so $\eta'$ is indeed convex.

    Let us now prove~\eqref{eq:ndl}.  We start from the
    definition~\eqref{eq:defdp} of $\nu'(\alpha')$, having fixed an
    $\alpha' \in [\alphamin', \alphamax')$.  Then $
    \set{\alpha}{\nu(\alpha)-\alpha > \alpha'} \neq \emptyset$
    (because its infimum is finite).  Let us write
    \begin{math}
	\tilde\alpha \deq \inf\set{\alpha}{\nu(\alpha)-\alpha >
	\alpha'} \nonumber
    \end{math}
    and consider two cases.
    
    \underline{If $\nu(\tilde\alpha) - \tilde\alpha > \alpha'$:}
    Necessarily $\tilde\alpha = \alphamin$ (otherwise a contradiction
    is easily reached using the continuity of $\nu$) and
    $\nu'(\alpha') = \alpha' - \alphamin' = \alpha' + \tilde\alpha$.
    The concavity of $\nu$ implies that as soon as $p \geq
    \partial^{+} \nu(\tilde\alpha)$, $\alpha \mapsto \alpha p -
    \nu(\alpha) + 1$ is increasing on $[\tilde\alpha,\infty)$ and
    \eqref{eq:concon} becomes $\eta(p) = \tilde\alpha p -
    \nu(\tilde\alpha) + 1$.  This means that if $p' = \frac{p}{p-1}$
    is close enough to $1$, $\eta'(p') = (p'-1) \nu(\tilde\alpha) -
    \tilde\alpha p' + 1$ and $\alpha' p' - \eta'(p') + 1 = p'(\alpha'
    +\tilde\alpha - \nu(\tilde\alpha)) + \nu(\tilde\alpha)$.  The
    function $p' \mapsto \alpha' p' - \eta'(p') + 1$, being concave,
    is thus strictly decreasing on $(1,\infty)$ and the supremum on
    the right-hand side of \eqref{eq:ndl} can be computed as
    \begin{align*}
	\sup_{p' > 1} \paren{ \alpha' p' - \eta'(p') + 1 } &= \lim_{p'
	\to 1} \alpha' p' - \eta'(p') + 1 \\
	&= \alpha' + \tilde\alpha.
    \end{align*}
    
    \underline{If $\nu(\tilde\alpha) - \tilde\alpha = \alpha'$:} Pick
    any $\alpha''$ such that $\nu(\alpha'') - \alpha'' > \alpha'$ and
    observe that $\nu(\alpha'') - \nu(\tilde\alpha) > \alpha'' -
    \tilde\alpha$; by concavity again this implies that
    $\partial^{+}\nu(\alpha) > 1$ when $\alpha > \tilde\alpha$ is
    close enough.  Thus by Lemma~\ref{lemm:plouf}, for these $\alpha$,
    the infimum in~\eqref{eq:nocnoc} is reached for a $p > 1$.
    Another consequence is that $\nu'(\alpha') = \alpha' + \inf
    \set{\alpha}{\nu(\alpha) - \alpha \geq \alpha'}$.  It follows that
    \begin{align*}
	\nu'(\alpha') &= \alpha' + \inf \set{\alpha}{\inf_{p>1}
	(\alpha p - \eta(p) + 1) - \alpha \geq \alpha'} \\
	& = \alpha' + \inf\set{\alpha}{\forall p > 1,
	\alpha (p - 1) - \eta(p) + 1 \geq \alpha'} \\
	& = \alpha' + \inf\set{\alpha}{\forall p > 1, \alpha \geq
	\frac{\alpha' + \eta(p) - 1}{p-1}} \\
	& = \alpha' + \sup_{p > 1} \paren{ \frac{\alpha' + \eta(p) -
	1}{p-1} }.
    \end{align*}
    On the other hand, changing the variable $p'$ into $p \deq
    \frac{p'}{p'-1}$ yields
    \begin{align*}
	\sup_{p' > 1} \paren{ \alpha' p' - \eta'(p') + 1} & = \sup_{p
	> 1} \paren{\frac{\alpha' p}{p - 1} - \frac{1-\eta(p)}{p-1} }
	\\
	& = \alpha' + \sup_{p > 1} \paren{ \frac{\alpha' + \eta(p) -
	1}{p-1} }
    \end{align*}
    and the proposition is proved.
    
\end{proof}

\end{document}